       \font\tenmsb=msbm10
       \font\sevenmsb=msbm7
       \font\fivemsb=msbm5
\let\amstexloaded@\relax\fi
       \def\spaces@{\space\space\space\space\space}
       \def\spaces@@{\spaces@\spaces@\spaces@\spaces@\spaces@}
       \def\space@.  {\futurelet\space@\relax}
       \def\Err@#1{\errhelp\defaulthelp@\errmessage{AmS-teX error: #1}}
       \def\relaxnext@{\let\next\relax}
       \def\accentfam@{7}
       \def\noaccents@{\def\accentfam@{0}}
       \def\Cal{\relaxnext@\ifmmode\let\next\Cal@\else
       \def\next{\Err@{Use \string\Cal\space only in math mode}}\fi\next}
       \def\Cal@#1{{\Cal@@{#1}}}
       \def\Cal@@#1{\noaccents@\fam\tw@#1}
       \def\Bbb{\relaxnext@\ifmmode\let\next\Bbb@\else
       \def\next{\Err@{Use \string\Bbb\space only in math mode}}\fi\next}
       \def\Bbb@#1{{\Bbb@@{#1}}}
       \def\Bbb@@#1{\noaccents@\fam\msbfam#1}
       \def\co{\tiny{\textcircled{\tiny\#}}}
\newtheorem{thm}{Theorem}[section]
\newtheorem{lem}[thm]{Lemma}
\newtheorem{rem}[thm]{Remark}
\newtheorem{iteration lemma}[thm]{iteration Lemma}
\newtheorem{cor}[thm]{Corollary}
\newtheorem*{acknowledgements*}{ACKNOWLEDGEMENtS}
\begin{document}

\setlength{\columnsep}{5pt}
\title{\bf The core and dual core inverse of a morphism with factorization}
\author{Tingting  Li\footnote{ E-mail: littnanjing@163.com},
\ Jianlong Chen\footnote{ Corresponding author. E-mail: jlchen@seu.edu.cn}\\
School of  Mathematics, Southeast University \\  Nanjing 210096,  China }
     \date{}

\maketitle
\begin{quote}
{\textbf{}\small
Let $\mathscr{C}$ be a category with an involution $\ast$.
Suppose that
$\varphi : X \rightarrow X$ is a morphism and $(\varphi_1, Z, \varphi_2)$ is an (epic, monic) factorization of $\varphi$ through $Z$,
then $\varphi$ is core invertible if and only if $(\varphi^{\ast})^2\varphi_1$ and $\varphi_2\varphi_1$ are both left invertible if and only if
$((\varphi^{\ast})^2\varphi_1, Z, \varphi_2)$,
$(\varphi_2^{\ast}, Z, \varphi_1^{\ast}\varphi^{\ast}\varphi)$ and $(\varphi^{\ast}\varphi_2^{\ast}, Z, \varphi_1^{\ast}\varphi)$ are all essentially unique (epic, monic) factorizations of $(\varphi^{\ast})^2\varphi$ through $Z$.
We also give the corresponding result about dual core inverse.
In addition,
we give some characterizations about the coexistence of core inverse and dual core inverse of an $R$-morphism in the category of $R$-modules of a given ring $R$.

\textbf {Keywords:} {\small Core inverse, Dual core inverse, Morphism, Factorization, Invertibility.}

\textbf {AMS subject classifications:} {15A09, 18A32.}
}
\end{quote}

\section{ Introduction }\label{a}
Let $\mathscr{C}$ be a category.
$\mathscr{C}$ is said to have an involution $\ast$ provided that there is a unary operation $\ast$ on the morphisms such that
$\varphi : X \rightarrow Y$ implies $\varphi^{\ast} : Y \rightarrow X$ and that $(\varphi^{\ast})^{\ast}=\varphi, (\varphi\psi)^{\ast}=\psi^{\ast}\varphi^{\ast}$ for all morphisms $\varphi$ and $\psi$ in $\mathscr{C}$ .
(See, for example, \cite[p. 131]{PR2}.)
Let $\varphi : X \rightarrow Y$ and $\chi : Y \rightarrow X$ be morphisms of $\mathscr{C}$.
Consider the following four equations:
\begin{center}
  $(1)$ $\varphi\chi\varphi=\varphi$,~~~$(2)$ $\chi\varphi\chi=\chi$,~~~$(3)$ $(\varphi\chi)^{\ast}=\varphi\chi$,~~~$(4)$ $(\chi\varphi)^{\ast}=\chi\varphi$.
\end{center}
Let $\varphi\{i,j,\cdots,l\}$ denote the set of morphisms $\chi$ which satisfy equations $(i),(j),\cdots,(l)$ from among equations $(1)$-$(4)$,
and in this case,
$\chi$ is called the $\{i,j,\cdots,l\}$-inverse of $\varphi$.
If $\chi \in \varphi\{1, 3\}$,
then $\chi$ is called a $\{1, 3\}$-inverse of $\varphi$ and is denoted by $\varphi^{(1, 3)}$.
A $\{1, 4\}$-inverse of $\varphi$ can be similarly defined.
If $\chi \in \varphi\{1, 2, 3, 4\}$,
then $\chi$ is called the Moore-Penrose inverse of $\varphi$.
If such a $\chi$ exists,
then it is unique and denoted by $\varphi^{\dagger}$.
If $X=Y$,
then a morphism $\varphi$ is group invertible if there is a morphism $\chi \in \varphi\{1, 2\}$ that commutes with $\varphi$.
If the group inverse of $\varphi$ exists,
then it is unique and denoted by $\varphi^{\#}$.
References to group inverses and Moore-Penrose inverses of morphisms can be seen in,
for example,
\cite{PR1}-\cite{Liu3}.

In 2010,
O.M. Baksalary and G. Trenkler introduced the core and dual core inverse of a complex matrix in \cite{OM}.
Raki\'{c} et al. \cite{DSR} generalized core inverse of a complex matrix to the case of an element in a ring with an involution $\ast$,
and they use five equations to characterize the core inverse.
In the following,
we rewrite these five equations in the category case.
Let $\mathscr{C}$ be a category with an involution and $\varphi : X \rightarrow X$ a morphism of $\mathscr{C}$.
If there is a morphism $\chi : X \rightarrow X$ satisfying
\begin{equation}\label{vvv}
\begin{split}
  \varphi\chi\varphi=\varphi,~\chi\varphi\chi=\chi,~(\varphi\chi)^{\ast}=\varphi\chi,~\varphi\chi^2=\chi,~\chi\varphi^2=\varphi,
\end{split}
\end{equation}
then $\varphi$ is core invertible and $\chi$ is called the core inverse of $\varphi$.
If such $\chi$ exists,
then it is unique and denoted by $\varphi^{\co}$.
In \cite{XSZ},
Xu et al. proved that equations $\varphi\chi\varphi=\varphi$ and $\chi\varphi\chi=\chi$ in (\ref{vvv}) can be dropped,
that is to say,
$\varphi$ is core invertible with $\varphi^{\co}=\chi$ if and only if
\begin{equation*}
\begin{split}
  (\varphi\chi)^{\ast}=\varphi\chi,~\varphi\chi^2=\chi,~\chi\varphi^2=\varphi.
\end{split}
\end{equation*}
And the dual core inverse can be given dually and denoted by $\varphi_{\co}$.
References to core and dual core inverses of morphisms can be seen in,
for example,
\cite{LiChen}.

In this paper,
the convention is used of reading morphism composition from left to right,
that is to say,
$$\varphi\psi : X \xrightarrow{\mbox{$\begin{array}{c}\varphi\end{array}$}} Y \xrightarrow{\mbox{$\begin{array}{c}\psi\end{array}$}} Z.$$
A morphism $\varphi$ is said to be epic if $\varphi\psi=\varphi\psi'$ implies $\psi=\psi'$,
and monic if $\psi\varphi=\psi'\varphi$ implies $\psi=\psi'$.
A morphism $\varphi : X \rightarrow Y$ is left invertible if there exists a morphism $\psi : Y \rightarrow X$ such that $\psi\varphi=1_Y$,
and right invertible if there exists a morphism $\psi : Y \rightarrow X$ such that $\varphi\psi=1_X$.

Let $\varphi : X \rightarrow Y$ be a morphism in $\mathscr{C}$.
If $\varphi_1 : X \rightarrow Z$ and $\varphi_2 : Z \rightarrow Y$ are morphisms and $\varphi=\varphi_1\varphi_2$,
then $(\varphi_1, Z, \varphi_2)$ is called a factorization of $\varphi$ through an object $Z$.
A factorization $(\varphi_1, Z, \varphi_2)$ of $\varphi$ through $Z$ is called an (epic, monic) factorization of $\varphi$ whenever $\varphi_1$ is epic and $\varphi_2$ is monic.
Furthermore,
an (epic, monic) factorization $(\varphi_1, Z, \varphi_2)$ of $\varphi$ through $Z$ is said to be essentially unique (see, for example, \cite{PR2}) if whenever $(\varphi_1', Z', \varphi_2')$ is also an (epic, monic) factorization of $\varphi$ through an object $Z'$,
then there is an invertible morphism $\nu : Z \rightarrow Z'$ such that $\varphi_1\nu=\varphi_1'$ and $\nu\varphi_2'=\varphi_2$.
References to generalized inverses of a factorization can be seen in,
for example,
\cite{PPP} and \cite{HHZhu}.

In \cite{PR2},
D.W. Robinson and R. Puystjens showed us the characterizations about the Moore-Penrose inverse of a morphism with a factorization.
And in \cite{DW2},
R. Puystjens and D.W. Robinson gave the characterizations about the group inverse of a morphism with a factorization,
and they also gave the characterizations about the Moore-Penrose inverse which is different from the results in \cite{PR2}.
Inspired by them,
the second part in this paper will give some characterizations about the core inverse and the dual core inverse of a morphism with a factorization,
respectively.

In \cite{Chen} and \cite{Li},
authors investigated the coexistence of core inverse and dual core inverse of an element in a $\ast$- ring which is a ring with an involution $\ast$ provided that there is an anti-isomorphism $\ast$ such that
$(a^{\ast})^{\ast}=a, (a+b)^{\ast}=a^{\ast}+b^{\ast}$ and $(ab)^{\ast}=b^{\ast}a^{\ast}$ for all $a, b\in R$.
It makes sense to investigate the coexistence of core inverse and dual core inverse of an $R$-morphism in the category of $R$-modules of a given ring $R$.
We give some characterizations about the coexistence of core inverse and dual core inverse of an $R$-morphism in the third part.

The following notations will be used in this paper:
$aR=\{ax~|~x\in R\}$, $Ra=\{xa~|~x\in R\}$, $^{\circ}\!a=\{x\in R~|~xa=0\}$, $a^{\circ}=\{x\in R~|~ax=0\}$.
In addition,
some auxiliary lemmas and results are presented for the further reference.

\begin{lem} \cite[Lemma $1$]{PR2}\label{ess.uniq}
Let $\varphi : X \rightarrow Y$ be a morphism of $\mathscr{C}$ with a factorization $(\varphi_1, Z, \varphi_2)$ through an object $Z$.
If $\varphi_1 : X \rightarrow Z$ is left invertible and $\varphi_2 : Z \rightarrow Y$ is right invertible,
then $(\varphi_1, Z, \varphi_2)$  is an essentially unique (epic, monic) factorization of $\varphi$ through $Z$.
\end{lem}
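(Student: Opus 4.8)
The plan is to establish two things: first, that $(\varphi_1, Z, \varphi_2)$ is genuinely an (epic, monic) factorization of $\varphi$, and second, that any competing (epic, monic) factorization of $\varphi$ is linked to it by an invertible morphism. For the first point I would record the elementary fact that a left-invertible morphism is epic and a right-invertible morphism is monic: if $\psi_1 : Z \to X$ satisfies $\psi_1\varphi_1 = 1_Z$ and $\varphi_1\alpha = \varphi_1\beta$, then $\alpha = \psi_1\varphi_1\alpha = \psi_1\varphi_1\beta = \beta$, so $\varphi_1$ is epic; the dual argument, using a morphism $\psi_2 : Y \to Z$ with $\varphi_2\psi_2 = 1_Z$, shows $\varphi_2$ is monic. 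Hence $(\varphi_1, Z, \varphi_2)$ is an (epic, monic) factorization of $\varphi$ through $Z$.

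For the second point, let $(\varphi_1', Z', \varphi_2')$ be an arbitrary (epic, monic) factorization of $\varphi$ through an object $Z'$, and set
\[
\nu := \psi_1\varphi_1' : Z \to Z', \qquad \mu := \varphi_2'\psi_2 : Z' \to Z.
\]
I would then verify the identities $\nu\varphi_2' = \varphi_2$ and $\varphi_1'\mu = \varphi_1$ directly: substituting $\varphi_1'\varphi_2' = \varphi = \varphi_1\varphi_2$ gives $\nu\varphi_2' = \psi_1\varphi_1'\varphi_2' = \psi_1\varphi_1\varphi_2 = \varphi_2$, and symmetrically $\varphi_1'\mu = \varphi_1\varphi_2\psi_2 = \varphi_1$. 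Combining the two computations yields $\nu\mu = \psi_1(\varphi_1'\varphi_2')\psi_2 = \psi_1\varphi_1\varphi_2\psi_2 = 1_Z$. To upgrade $\nu\varphi_2' = \varphi_2$ to $\varphi_1\nu = \varphi_1'$, I would observe that $\varphi_1\nu\varphi_2' = \varphi_1\varphi_2 = \varphi = \varphi_1'\varphi_2'$ and cancel the monic $\varphi_2'$ on the right.

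It remains to show that $\nu$ is invertible with two-sided inverse $\mu$; since $\nu\mu = 1_Z$ is already in hand, the point is $\mu\nu = 1_{Z'}$. Here I would compute $\varphi_1'(\mu\nu) = (\varphi_1'\mu)\nu = \varphi_1\nu = \varphi_1' = \varphi_1'\,1_{Z'}$ and cancel the epic $\varphi_1'$ on the left. Then $\nu : Z \to Z'$ is an invertible morphism with $\varphi_1\nu = \varphi_1'$ and $\nu\varphi_2' = \varphi_2$, which is precisely the essential uniqueness of the factorization. Every computation here is a one-liner once $\nu$ and $\mu$ are written down, so I expect no genuine obstacle beyond keeping track of the left-to-right composition convention and ensuring that each cancellation is performed on the correct side — the monic $\varphi_2'$ cancels from the right, the epic $\varphi_1'$ from the left.
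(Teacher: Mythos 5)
Your proof is correct and is essentially the standard argument: the paper only cites this lemma from Puystjens--Robinson (reference [1]) without reproducing a proof, and the explicit construction $\nu=\psi_1\varphi_1'$, $\mu=\varphi_2'\psi_2$ together with the cancellation of the monic $\varphi_2'$ and the epic $\varphi_1'$ is exactly the intended verification. All composites are formed consistently with the paper's left-to-right convention, so nothing further is needed.
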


It should be pointed out,
\cite[Theorem $2.6$ and $2.8$]{XSZ},
\cite[Theorem $2.10$]{Li},
\cite[Theorem $2.11$]{Li} and \cite[p. $201$]{Hartwig} were put forward in a $\ast$- ring.
Actually,
one can easily prove that these results are also valid in a category with an involution $\ast$.
Thus,
we can rewrite them in the category case as the following Lemma~\ref{222} - \ref{000},
respectively.

\begin{lem} \cite[Theorem $2.6$ and $2.8$]{XSZ}\label{222}
Let $\varphi : X \rightarrow X$ be a morphism of $\mathscr{C}$,
we have the following results:\\
(i) $\varphi$ is core invertible if and only if $\varphi$ is group invertible and $\{1,3\}$-invertible.
In this case,
$\varphi^{\co}=\varphi^{\#}\varphi\varphi^{(1,3)}$.\\
(ii) $\varphi$ is dual core invertible if and only if $\varphi$ is group invertible and $\{1,4\}$-invertible.
In this case,
$\varphi_{\co}=\varphi^{(1,4)}\varphi\varphi^{\#}$.
\end{lem}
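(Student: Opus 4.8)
The plan is to prove (i) by exhibiting the relevant generalized inverses explicitly, and then to deduce (ii) from (i) by applying it to $\varphi^{\ast}$.

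For the ``only if'' part of (i): if $\chi=\varphi^{\co}$, then the equations $\varphi\chi\varphi=\varphi$ and $(\varphi\chi)^{\ast}=\varphi\chi$ from (\ref{vvv}) say at once that $\chi$ is a $\{1,3\}$-inverse of $\varphi$. For the group inverse I would test the candidate $\psi:=\chi^{2}\varphi$: the identities $\varphi\chi^{2}=\chi$ and $\chi\varphi^{2}=\varphi$ give $\varphi\psi=\chi\varphi=\psi\varphi$, and then $\varphi\psi\varphi=\varphi$ and $\psi\varphi\psi=\psi$ follow using $\chi\varphi^{2}=\varphi$ and $\chi\varphi\chi=\chi$; hence $\psi=\varphi^{\#}$. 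A one-line computation, $\varphi^{\#}\varphi\varphi^{(1,3)}=\chi^{2}\varphi^{2}\chi=\chi\varphi\chi=\chi$, then gives the stated formula for this choice of $\{1,3\}$-inverse.

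For the ``if'' part of (i): assume $\varphi^{\#}$ exists and let $b$ be any $\{1,3\}$-inverse of $\varphi$; set $\chi:=\varphi^{\#}\varphi b$. The key preliminary step is to record the reductions $\varphi\chi=\varphi b$ and $\chi\varphi=\varphi^{\#}\varphi=\varphi\varphi^{\#}$ (the latter using $\varphi b\varphi=\varphi$ together with the commutativity of $\varphi^{\#}$), whence also $\varphi b\varphi^{\#}\varphi=\varphi\varphi^{\#}$. From these one reads off $(\varphi\chi)^{\ast}=(\varphi b)^{\ast}=\varphi\chi$, $\chi\varphi^{2}=\varphi^{\#}\varphi^{2}=\varphi$, and $\varphi\chi^{2}=\varphi b\varphi^{\#}\varphi b=\varphi\varphi^{\#}b=\chi$, so by the reduced characterization of the core inverse recalled after (\ref{vvv}) (one may instead verify $\varphi\chi\varphi=\varphi b\varphi=\varphi$ and $\chi\varphi\chi=\varphi^{\#}\varphi b=\chi$ directly), $\chi$ is the core inverse of $\varphi$. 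Since $b$ was arbitrary, this simultaneously proves core invertibility and shows that $\varphi^{\co}=\varphi^{\#}\varphi\varphi^{(1,3)}$ for every choice of $\{1,3\}$-inverse.

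Part (ii) I would obtain from (i) by duality, so as not to repeat the work. One checks directly that $\chi\in\varphi\{1,4\}$ if and only if $\chi^{\ast}\in\varphi^{\ast}\{1,3\}$, that $\varphi$ is group invertible if and only if $\varphi^{\ast}$ is, with $(\varphi^{\ast})^{\#}=(\varphi^{\#})^{\ast}$, and that $\varphi$ is dual core invertible if and only if $\varphi^{\ast}$ is core invertible, with $\varphi_{\co}=\big((\varphi^{\ast})^{\co}\big)^{\ast}$; applying part (i) to $\varphi^{\ast}$ and taking $\ast$ of the identity $(\varphi^{\ast})^{\co}=(\varphi^{\ast})^{\#}\varphi^{\ast}(\varphi^{\ast})^{(1,3)}$ then yields $\varphi_{\co}=\varphi^{(1,4)}\varphi\varphi^{\#}$. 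The one delicate point throughout is the choice of intermediate identities in the ``if'' direction of (i): they must be arranged so that $\varphi\varphi^{\#}=\varphi^{\#}\varphi$ can be brought to bear without ever invoking a $\{1,4\}$-condition on $b$; once the reductions $\varphi\chi=\varphi b$ and $\chi\varphi=\varphi\varphi^{\#}$ are in hand, everything else is a short symbolic manipulation.
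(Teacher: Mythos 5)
The paper gives no proof of this lemma: it is imported from Xu--Chen--Zhang \cite{XSZ} with only the remark that the ring-theoretic argument carries over to a category with involution, so there is no in-paper proof to compare against. Your blind proof is a correct, self-contained verification and is essentially the standard argument. All the key steps check out: in the ``only if'' direction $\psi=\chi^{2}\varphi$ does satisfy $\varphi\psi=\psi\varphi=\chi\varphi$, $\varphi\psi\varphi=\varphi$ and $\psi\varphi\psi=\psi$ via $\varphi\chi^{2}=\chi$ and $\chi\varphi^{2}=\varphi$; in the ``if'' direction the reductions $\varphi\chi=\varphi b$ and $\chi\varphi=\varphi^{\#}\varphi=\varphi\varphi^{\#}$ for $\chi=\varphi^{\#}\varphi b$ do yield the three defining identities $(\varphi\chi)^{\ast}=\varphi\chi$, $\varphi\chi^{2}=\chi$, $\chi\varphi^{2}=\varphi$ without ever needing a $\{1,4\}$-condition on $b$; and the duality dictionary ($\chi\in\varphi\{1,4\}\Leftrightarrow\chi^{\ast}\in\varphi^{\ast}\{1,3\}$, $(\varphi^{\ast})^{\#}=(\varphi^{\#})^{\ast}$, $\varphi_{\co}=((\varphi^{\ast})^{\co})^{\ast}$) correctly transports (i) to (ii), with the involution reversing the composition order to produce $\varphi^{(1,4)}\varphi\varphi^{\#}$. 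A minor virtue of your argument worth keeping is that the ``if'' direction establishes the formula $\varphi^{\co}=\varphi^{\#}\varphi\varphi^{(1,3)}$ for an arbitrary choice of $\{1,3\}$-inverse, not just the one produced in the ``only if'' step.
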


\begin{lem}\cite[Theorem $2.10$]{Li}\label{Li1}
Let $\varphi : X \rightarrow X$ be a morphism of $\mathscr{C}$ and $n\geqslant 2$ a positive integer,
we have the following results:\\
(i) $\varphi$ is core invertible if and only if there exist morphisms $\varepsilon : X \rightarrow X$ and $\tau : X \rightarrow X$ such that $\varphi=\varepsilon(\varphi^{\ast})^n\varphi=\tau\varphi^n$.
In this case,
$\varphi^{\co}=\varphi^{n-1}\varepsilon^{\ast}$.\\
(ii) $\varphi$ is dual core invertible if and only if there exist morphisms $\theta : X \rightarrow X$ and $\rho : X \rightarrow X$ such that $\varphi=\varphi(\varphi^{\ast})^n\theta=\varphi^n\rho$.
In this case,
$\varphi_{\co}=\theta^{\ast}\varphi^{n-1}$.
\end{lem}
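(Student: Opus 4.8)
The plan is to prove part~(i) directly and then obtain part~(ii) from it by applying the involution. Throughout I would write $\chi := \varphi^{n-1}\varepsilon^{\ast}$ for the candidate core inverse, use Lemma~\ref{222}(i), and use freely that $\varphi^{\ast}$ is group invertible with $(\varphi^{\ast})^{\#} = (\varphi^{\#})^{\ast}$ whenever $\varphi^{\#}$ exists. I will also invoke the standard fact that a morphism $\varphi : X \to X$ which is simultaneously a left multiple and a right multiple of $\varphi^{2}$ (i.e.\ $\varphi = \tau'\varphi^{2} = \varphi^{2}\sigma$ for some $\sigma,\tau'$) is group invertible --- the morphism analogue of the ring criterion $a \in a^{2}R \cap Ra^{2}$.

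For the ``if'' direction of~(i): from $\varphi = \varepsilon(\varphi^{\ast})^{n}\varphi$ I would set $b := \varepsilon(\varphi^{\ast})^{n-1}$, so that $\varphi = b\varphi^{\ast}\varphi$ and $b^{\ast} = \varphi^{n-1}\varepsilon^{\ast} = \chi$. Applying $\ast$ gives $\varphi^{\ast} = \varphi^{\ast}\varphi b^{\ast}$, hence $b\varphi^{\ast} = b(\varphi^{\ast}\varphi b^{\ast}) = (b\varphi^{\ast}\varphi)b^{\ast} = \varphi b^{\ast}$; therefore $\varphi\chi = \varphi b^{\ast} = b\varphi^{\ast}$ is Hermitian and $\varphi\chi\varphi = (b\varphi^{\ast})\varphi = \varphi$, i.e.\ $\chi \in \varphi\{1,3\}$. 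This is where $n\geq 2$ enters: $\varphi\chi = \varphi^{n}\varepsilon^{\ast} = \varphi^{2}(\varphi^{n-2}\varepsilon^{\ast})$, so $\varphi = \varphi\chi\varphi = \varphi^{2}(\varphi^{n-2}\varepsilon^{\ast}\varphi)$ is a right multiple of $\varphi^{2}$, while $\varphi = \tau\varphi^{n} = (\tau\varphi^{n-2})\varphi^{2}$ is a left multiple of $\varphi^{2}$. Thus $\varphi$ is group invertible, and by Lemma~\ref{222}(i) it is core invertible with $\varphi^{\co} = \varphi^{\#}\varphi\varphi^{(1,3)} = \varphi^{\#}(\varphi\chi) = \varphi^{\#}\varphi^{n}\varepsilon^{\ast} = \varphi^{n-1}\varepsilon^{\ast} = \chi$, using $\varphi^{\#}\varphi^{n} = (\varphi^{\#}\varphi^{2})\varphi^{n-2} = \varphi^{n-1}$.

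For the ``only if'' direction of~(i): assuming $\varphi$ core invertible, Lemma~\ref{222}(i) gives $\varphi$ group invertible. Take $\tau := (\varphi^{\#})^{n-1}$, so $\tau\varphi^{n} = (\varphi^{\#}\varphi)^{n-1}\varphi = (\varphi^{\#}\varphi)\varphi = \varphi$, and $\varepsilon := (\varphi^{\co})^{\ast}\big((\varphi^{\#})^{\ast}\big)^{n-1}$. Since $\varphi^{n}(\varphi^{\#})^{n-1} = \varphi(\varphi\varphi^{\#})^{n-1} = \varphi^{2}\varphi^{\#} = \varphi$, applying $\ast$ yields $\big((\varphi^{\#})^{\ast}\big)^{n-1}(\varphi^{\ast})^{n} = \varphi^{\ast}$, and therefore $\varepsilon(\varphi^{\ast})^{n}\varphi = (\varphi^{\co})^{\ast}\varphi^{\ast}\varphi = (\varphi\varphi^{\co})^{\ast}\varphi = \varphi\varphi^{\co}\varphi = \varphi$. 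So suitable $\varepsilon,\tau$ exist; that any such $\varepsilon$ satisfies $\varphi^{\co} = \varphi^{n-1}\varepsilon^{\ast}$ is already covered by the ``if'' direction.

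Part~(ii) should then follow from~(i) applied to $\varphi^{\ast}$, together with the observation (immediate from the definitions by applying $\ast$) that $\varphi$ is dual core invertible iff $\varphi^{\ast}$ is core invertible, in which case $\varphi_{\co} = \big((\varphi^{\ast})^{\co}\big)^{\ast}$: writing the ``if'' data of~(i) for $\varphi^{\ast}$ as $\varphi^{\ast} = \mu\varphi^{n}\varphi^{\ast} = \nu(\varphi^{\ast})^{n}$ and putting $\theta := \mu^{\ast}$, $\rho := \nu^{\ast}$, applying $\ast$ gives $\varphi = \varphi(\varphi^{\ast})^{n}\theta = \varphi^{n}\rho$ and $\varphi_{\co} = \big((\varphi^{\ast})^{n-1}\theta\big)^{\ast} = \theta^{\ast}\varphi^{n-1}$. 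I expect the ``if'' direction of~(i) to be the crux: the key realization is that the hypothesis $\varphi = \varepsilon(\varphi^{\ast})^{n}\varphi$ with $n\geq 2$ does double duty --- it produces a $\{1,3\}$-inverse $\chi$ and, because $\varphi\chi$ then carries a visible factor $\varphi^{2}$ on the right, it also supplies (together with $\varphi = \tau\varphi^{n}$) the group invertibility required to apply Lemma~\ref{222}.
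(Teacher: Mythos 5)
Your proposal is correct. Note that the paper itself gives no proof of this lemma: it is quoted from \cite[Theorem 2.10]{Li} with only the remark that the ring-theoretic proof carries over to a category with involution, so there is nothing internal to compare against. Your argument is a valid self-contained verification and follows the natural route suggested by the paper's own toolkit: the identity $b\varphi^{\ast}\varphi=\varphi$ with $b=\varepsilon(\varphi^{\ast})^{n-1}$ is exactly the $\{1,3\}$-invertibility criterion of Lemma~\ref{000}(i) (which you re-derive inline rather than cite), and combining it with group invertibility via Lemma~\ref{222}(i) is precisely how the cited ring result is obtained. The one ingredient you invoke without proof is the semigroup fact that $\varphi=\tau'\varphi^{2}=\varphi^{2}\sigma$ forces group invertibility; this is standard (and holds verbatim in the endomorphism monoid $\mathscr{C}(X,X)$, e.g.\ $\varphi^{\#}=\tau'\varphi\sigma$ works), but since the paper never states it, a one-line verification or a reference would make the argument fully self-contained. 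All the individual computations --- the Hermitian-ness of $\varphi\chi$ via $b\varphi^{\ast}=\varphi b^{\ast}$, the identity $\varphi^{\#}\varphi^{n}=\varphi^{n-1}$, the choice $\varepsilon=(\varphi^{\co})^{\ast}\bigl((\varphi^{\#})^{\ast}\bigr)^{n-1}$ in the converse, and the $\ast$-duality reduction of (ii) to (i) --- check out against the paper's conventions.
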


\begin{lem}\cite[Theorem $2.11$]{Li}\label{Li2}
Let $\varphi : X \rightarrow X$ be a morphism of $\mathscr{C}$ and $n\geqslant 2$ a positive integer,
the following statements are equivalent:\\
(i) $\varphi$ is both Moore-Penrose invertible and group invertible.\\
(ii) $\varphi$ is both core invertible and dual core invertible.\\
(iii) There exist $\alpha : X \rightarrow X$ and $\beta : X \rightarrow X$ such that $\varphi=\alpha(\varphi^{\ast})^n\varphi=\varphi(\varphi^{\ast})^n\beta$.\\
In this case,
\begin{equation*}
\begin{split}
    \varphi^{\co}&~=\varphi^{n-1}\alpha^{\ast},\\
    \varphi_{\co}&~=\beta^{\ast}\varphi^{n-1},\\
    \varphi^{\dagger}&~=\beta\varphi^{2n-1}\alpha^{\ast},\\
    \varphi^{\#}&~=(\varphi^{n-1}\alpha^{\ast})^2\varphi=\varphi(\beta^{\ast}\varphi^{n-1})^2.
\end{split}
\end{equation*}
\end{lem}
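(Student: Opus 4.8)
\emph{Sketch of the intended proof.}

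\textbf{(i)$\Leftrightarrow$(ii), and (ii)$\Rightarrow$(iii).} First I would record the elementary fact that a morphism is Moore--Penrose invertible if and only if it is simultaneously $\{1,3\}$-invertible and $\{1,4\}$-invertible: if $\varphi^{(1,3)}$ and $\varphi^{(1,4)}$ both exist, then $\varphi^{(1,4)}\varphi\varphi^{(1,3)}$ is readily checked to satisfy all four Penrose equations (this is the only genuine computation in this part, and it is short), while conversely $\varphi^{\dagger}$ is itself both a $\{1,3\}$- and a $\{1,4\}$-inverse. Combining this with Lemma~\ref{222}(i)--(ii) yields the chain ``$\varphi$ core invertible and dual core invertible'' $\Leftrightarrow$ ``$\varphi$ group invertible, $\{1,3\}$-invertible and $\{1,4\}$-invertible'' $\Leftrightarrow$ ``$\varphi$ group invertible and Moore--Penrose invertible'', which is exactly (i)$\Leftrightarrow$(ii). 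The implication (ii)$\Rightarrow$(iii) is then immediate from Lemma~\ref{Li1}: core invertibility produces $\varepsilon$ with $\varphi=\varepsilon(\varphi^{\ast})^{n}\varphi$ and dual core invertibility produces $\theta$ with $\varphi=\varphi(\varphi^{\ast})^{n}\theta$, so one takes $\alpha:=\varepsilon$ and $\beta:=\theta$.

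\textbf{(iii)$\Rightarrow$(ii).} This is the heart of the matter. Put $E_{1}:\ \varphi=\alpha(\varphi^{\ast})^{n}\varphi$ and $E_{2}:\ \varphi=\varphi(\varphi^{\ast})^{n}\beta$, and apply $\ast$ (using $((\varphi^{\ast})^{n})^{\ast}=\varphi^{n}$) to get $E_{1}^{\ast}:\ \varphi^{\ast}=\varphi^{\ast}\varphi^{n}\alpha^{\ast}$ and $E_{2}^{\ast}:\ \varphi^{\ast}=\beta^{\ast}\varphi^{n}\varphi^{\ast}$. The key move is to feed $E_{1}^{\ast}$ back into $E_{1}$: replacing the last factor $\varphi^{\ast}$ of the block $(\varphi^{\ast})^{n}$ in $E_{1}$ by $\varphi^{\ast}\varphi^{n}\alpha^{\ast}$ gives
\[
\varphi=\alpha(\varphi^{\ast})^{n}\varphi^{n}\alpha^{\ast}\varphi=\bigl[\alpha(\varphi^{\ast})^{n}\varphi\bigr]\varphi^{n-1}\alpha^{\ast}\varphi=\varphi\cdot\varphi^{n-1}\alpha^{\ast}\varphi=\varphi^{n}\alpha^{\ast}\varphi,
\]
where the bracket was collapsed by $E_{1}$; in particular $\varphi=\varphi^{n}\rho$ with $\rho:=\alpha^{\ast}\varphi$. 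Symmetrically, substituting $E_{2}^{\ast}$ into $E_{2}$ (replace the first factor $\varphi^{\ast}$ of $(\varphi^{\ast})^{n}$ by $\beta^{\ast}\varphi^{n}\varphi^{\ast}$) yields
\[
\varphi=\varphi\beta^{\ast}\varphi^{n}(\varphi^{\ast})^{n}\beta=\varphi\beta^{\ast}\varphi^{n-1}\bigl[\varphi(\varphi^{\ast})^{n}\beta\bigr]=\varphi\beta^{\ast}\varphi^{n-1}\cdot\varphi=\varphi\beta^{\ast}\varphi^{n},
\]
so $\varphi=\tau\varphi^{n}$ with $\tau:=\varphi\beta^{\ast}$. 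Now $E_{1}$ together with $\varphi=\tau\varphi^{n}$ is precisely the hypothesis of Lemma~\ref{Li1}(i) (its $\varepsilon$ being $\alpha$), so $\varphi$ is core invertible; and $E_{2}$ together with $\varphi=\varphi^{n}\rho$ is the hypothesis of Lemma~\ref{Li1}(ii) (its $\theta$ being $\beta$), so $\varphi$ is dual core invertible. Hence (ii). I expect the only real obstacle to be spotting these two substitutions, which manufacture the missing one-sided power $\varphi^{n}$ out of the two ``$(\varphi^{\ast})^{n}$-fixed-point'' identities $E_{1}^{\ast},E_{2}^{\ast}$; once $\varphi=\tau\varphi^{n}$ and $\varphi=\varphi^{n}\rho$ are in hand, the rest is associativity and the anti-automorphism law for $\ast$.

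\textbf{The explicit formulas.} These drop out with no further work. Lemma~\ref{Li1}(i) with $\varepsilon=\alpha$ gives $\varphi^{\co}=\varphi^{n-1}\alpha^{\ast}$, and Lemma~\ref{Li1}(ii) with $\theta=\beta$ gives $\varphi_{\co}=\beta^{\ast}\varphi^{n-1}$. The remaining two come from the standard identities valid for a morphism that is both core and dual core invertible, namely $\varphi^{\#}=(\varphi^{\co})^{2}\varphi=\varphi(\varphi_{\co})^{2}$ and $\varphi^{\dagger}=\varphi_{\co}\,\varphi\,\varphi^{\co}$ (the latter because $\varphi^{\co}\in\varphi\{1,3\}$, $\varphi_{\co}\in\varphi\{1,4\}$ and $\varphi^{(1,4)}\varphi\varphi^{(1,3)}=\varphi^{\dagger}$); substituting the expressions just found for $\varphi^{\co}$ and $\varphi_{\co}$ turns these into the displayed formulas.
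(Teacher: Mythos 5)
The paper itself offers no proof of this lemma: it is imported from \cite[Theorem 2.11]{Li} with only the remark that the ring-theoretic argument transfers to a category with involution. So there is no in-paper proof to compare against; judged on its own terms, your argument is essentially correct and self-contained modulo Lemmas~\ref{222} and~\ref{Li1}. The reduction of (i)$\Leftrightarrow$(ii) to ``Moore--Penrose invertible $\Leftrightarrow$ $\{1,3\}$- and $\{1,4\}$-invertible'' combined with Lemma~\ref{222} is the standard route, and your substitution trick for (iii)$\Rightarrow$(ii) --- feeding $\varphi^{\ast}=\varphi^{\ast}\varphi^{n}\alpha^{\ast}$ back into $\varphi=\alpha(\varphi^{\ast})^{n}\varphi$ to manufacture $\varphi=\varphi^{n}(\alpha^{\ast}\varphi)$, and dually $\varphi=(\varphi\beta^{\ast})\varphi^{n}$ --- checks out; together with Lemma~\ref{Li1} it yields both invertibilities and the formulas $\varphi^{\co}=\varphi^{n-1}\alpha^{\ast}$ and $\varphi_{\co}=\beta^{\ast}\varphi^{n-1}$. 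The identities $\varphi^{\#}=(\varphi^{\co})^{2}\varphi=\varphi(\varphi_{\co})^{2}$ and $\varphi^{\dagger}=\varphi^{(1,4)}\varphi\varphi^{(1,3)}$ that you invoke are likewise valid and quick to verify from the defining equations.

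One claim does not survive scrutiny: you say that substituting into $\varphi^{\dagger}=\varphi_{\co}\varphi\varphi^{\co}$ ``turns these into the displayed formulas,'' but it yields $\varphi^{\dagger}=\beta^{\ast}\varphi^{2n-1}\alpha^{\ast}$, whereas the lemma as printed reads $\varphi^{\dagger}=\beta\varphi^{2n-1}\alpha^{\ast}$. These disagree in general: view $\mathbb{C}$ with conjugation as a one-object category, take $\varphi=e^{is}$ and $n=2$; then $\alpha=\beta=e^{2is}$ are forced, so $\beta\varphi^{3}\alpha^{\ast}=e^{3is}$ while $\varphi^{\dagger}=e^{-is}=\beta^{\ast}\varphi^{3}\alpha^{\ast}$. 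Your computation is the correct one, and it exposes a typo in the stated lemma (the involution on $\beta$ is missing); you should say so explicitly rather than assert that your expression matches the display as written.
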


\begin{lem} \cite[p. $201$]{Hartwig}\label{000}
Let $\varphi : X \rightarrow Y$ be a morphism of $\mathscr{C}$,
we have the following results:\\
$(i)$ $\varphi$ is $\{1,3\}$-invertible with $\{1,3\}$-inverse $\chi : Y \rightarrow X$ if and only if $\chi^{\ast}\varphi^{\ast}\varphi=\varphi;$\\
$(ii)$ $\varphi$ is $\{1,4\}$-invertible with $\{1,4\}$-inverse $\zeta : Y \rightarrow X$ if and only if $\varphi\varphi^{\ast}\zeta^{\ast}=\varphi.$
\end{lem}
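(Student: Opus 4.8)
My plan is to prove both equivalences by direct manipulation of the defining equations $(1)$ and $(3)$ (resp. $(1)$ and $(4)$), using only that $\ast$ is an involution on $\mathscr{C}$ with $(\psi\chi)^{\ast}=\chi^{\ast}\psi^{\ast}$; this is just the categorical transcription of Hartwig's ring argument, and part $(ii)$ will be the mirror image of part $(i)$ with the two sides interchanged. Throughout I keep in mind that composition is written from left to right, as fixed in the paper.

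For $(i)$, the forward implication is immediate: if $\chi$ is a $\{1,3\}$-inverse, then $\varphi\chi\varphi=\varphi$ and $(\varphi\chi)^{\ast}=\varphi\chi$; since $(\varphi\chi)^{\ast}=\chi^{\ast}\varphi^{\ast}$, equation $(3)$ says $\chi^{\ast}\varphi^{\ast}=\varphi\chi$, and substituting this into equation $(1)$ gives $\chi^{\ast}\varphi^{\ast}\varphi=\varphi\chi\varphi=\varphi$. For the converse I would start from $\chi^{\ast}\varphi^{\ast}\varphi=\varphi$ and compose on the right with $\chi$ to get $\varphi\chi=\chi^{\ast}\varphi^{\ast}\varphi\chi$. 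The point is that the right-hand side is self-adjoint: applying $\ast$ and reversing the factors sends $\chi^{\ast}\varphi^{\ast}\varphi\chi$ to itself. Hence $(\varphi\chi)^{\ast}=\varphi\chi$, which is equation $(3)$; and then $\chi^{\ast}\varphi^{\ast}=(\varphi\chi)^{\ast}=\varphi\chi$, so $\varphi\chi\varphi=\chi^{\ast}\varphi^{\ast}\varphi=\varphi$, which is equation $(1)$. Thus $\chi\in\varphi\{1,3\}$.

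For $(ii)$ I would run the same argument on the other side. If $\zeta$ is a $\{1,4\}$-inverse, equation $(4)$ reads $\varphi^{\ast}\zeta^{\ast}=\zeta\varphi$, and substituting into $\varphi\zeta\varphi=\varphi$ gives $\varphi\varphi^{\ast}\zeta^{\ast}=\varphi$. Conversely, starting from $\varphi\varphi^{\ast}\zeta^{\ast}=\varphi$ and composing on the left with $\zeta$ gives $\zeta\varphi=\zeta\varphi\varphi^{\ast}\zeta^{\ast}=(\zeta\varphi)(\zeta\varphi)^{\ast}$, which is self-adjoint, so equation $(4)$ holds; then $\varphi^{\ast}\zeta^{\ast}=\zeta\varphi$ yields $\varphi\zeta\varphi=\varphi\varphi^{\ast}\zeta^{\ast}=\varphi$, equation $(1)$, so $\zeta\in\varphi\{1,4\}$.

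There is no real obstacle here: the whole content is formal and uses nothing beyond the involution axioms, so the statement holds in $\mathscr{C}$ exactly as in a $\ast$-ring. The only things to watch are the left-to-right composition convention and the contravariance of $\ast$, so that the factors are reversed correctly when one writes $(\varphi\chi)^{\ast}=\chi^{\ast}\varphi^{\ast}$ and when one checks that $\chi^{\ast}\varphi^{\ast}\varphi\chi$ (resp. $(\zeta\varphi)(\zeta\varphi)^{\ast}$) is genuinely fixed by $\ast$.
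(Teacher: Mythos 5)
Your argument is correct and is exactly the standard one the paper has in mind: the lemma is only cited from Hartwig (p.~201) and asserted to carry over from $\ast$-rings to categories with involution, with no proof given in the text, and your verification — the forward direction by substituting $(\varphi\chi)^{\ast}=\chi^{\ast}\varphi^{\ast}=\varphi\chi$ into $\varphi\chi\varphi=\varphi$, and the converse by observing that $\chi^{\ast}\varphi^{\ast}\varphi\chi=\alpha\alpha^{\ast}$ with $\alpha=\chi^{\ast}\varphi^{\ast}$ is fixed by $\ast$ — is precisely that transcription, with all compositions correctly typed under the left-to-right convention. Part $(ii)$ is handled correctly as the mirror image, so there is nothing to add.
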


\section{Main Results}\label{a}
In \cite{PR2},
D.W. Robinson and R. Puystjens gave some results about the Moore-Penrose inverse of a morphism with a factorization.
And in \cite{DW2},
R. Puystjens and D.W. Robinson gave the characterizations about the group inverse of a morphism with a factorization,
as follows.

\begin{lem}\cite[Theorem~$1$]{DW2}\label{KKK}
Let $(\varphi_1, Z, \varphi_2)$ be an (epic, monic) factorization of a morphism $\varphi : X \rightarrow X$ through an object $Z$ of a category.
Then the following statements are equivalent:\\
$(i)$ $\varphi$ has a group inverse $\varphi^{\#} : X \rightarrow X$,\\
$(ii)$ $\varphi_2\varphi_1 : Z \rightarrow Z$ is invertible,\\
$(iii)$ $(\varphi_1, Z, \varphi_2\varphi)$ and $(\varphi\varphi_1, Z, \varphi_2)$ are both essentially unique (epic, monic) factorizations of $\varphi^2$ through $Z$.
\end{lem}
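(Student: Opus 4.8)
The plan is to route everything through statement (ii), proving (i) $\Leftrightarrow$ (ii) and (ii) $\Leftrightarrow$ (iii), and writing $u := \varphi_2\varphi_1 : Z \to Z$ once and for all. The two structural facts used constantly are that $\varphi_1$ epic lets one cancel it on the left and $\varphi_2$ monic lets one cancel it on the right, together with $\varphi_1\varphi_2 = \varphi$.

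For (i) $\Rightarrow$ (ii): rewrite $\varphi\varphi^{\#}\varphi = \varphi$ as $\varphi_1(\varphi_2\varphi^{\#}\varphi_1)\varphi_2 = \varphi_1\varphi_2$ and cancel the epic $\varphi_1$ and then the monic $\varphi_2$ to get $\varphi_2\varphi^{\#}\varphi_1 = 1_Z$. I then claim $v := \varphi_2(\varphi^{\#})^2\varphi_1$ is a two-sided inverse of $u$: since $\varphi_1\varphi_2 = \varphi$, one has $uv = \varphi_2\varphi(\varphi^{\#})^2\varphi_1$ and $vu = \varphi_2(\varphi^{\#})^2\varphi\varphi_1$, and the group-inverse identities $\varphi\varphi^{\#} = \varphi^{\#}\varphi$ and $\varphi^{\#}\varphi\varphi^{\#} = \varphi^{\#}$ reduce both $\varphi(\varphi^{\#})^2$ and $(\varphi^{\#})^2\varphi$ to $\varphi^{\#}$, so $uv = vu = \varphi_2\varphi^{\#}\varphi_1 = 1_Z$. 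For (ii) $\Rightarrow$ (i): put $\chi := \varphi_1 u^{-2}\varphi_2$ and check the three conditions $\varphi\chi\varphi = \varphi$, $\chi\varphi\chi = \chi$, $\varphi\chi = \chi\varphi$ directly; every product telescopes through $\varphi_1\varphi_2 = \varphi$ and $\varphi_2\varphi_1 = u$ (e.g. $\varphi\chi = \varphi_1(\varphi_2\varphi_1)u^{-2}\varphi_2 = \varphi_1 u^{-1}\varphi_2 = \chi\varphi$ and $\varphi\chi\varphi = \varphi_1 u^{-1}(\varphi_2\varphi_1)\varphi_2 = \varphi$), so $\varphi^{\#} = \chi$.

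For (ii) $\Rightarrow$ (iii): from $\varphi^2 = \varphi_1\varphi_2\varphi_1\varphi_2$ read off the two factorizations $\varphi^2 = \varphi_1(\varphi_2\varphi)$ and $\varphi^2 = (\varphi\varphi_1)\varphi_2$, noting $\varphi_2\varphi = u\varphi_2$ and $\varphi\varphi_1 = \varphi_1 u$. Invertibility of $u$ gives $(u^{-1}\varphi_2)\varphi_1 = 1_Z$ and $(u\varphi_2)(\varphi_1 u^{-2}) = 1_Z$, i.e. $\varphi_1$ is left invertible and $\varphi_2\varphi$ is right invertible, so Lemma~\ref{ess.uniq} makes $(\varphi_1, Z, \varphi_2\varphi)$ an essentially unique (epic, monic) factorization of $\varphi^2$ through $Z$; symmetrically, $(u^{-2}\varphi_2)(\varphi\varphi_1) = 1_Z$ and $\varphi_2(\varphi_1 u^{-1}) = 1_Z$ handle $(\varphi\varphi_1, Z, \varphi_2)$. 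For (iii) $\Rightarrow$ (ii): apply the essential uniqueness of $(\varphi_1, Z, \varphi_2\varphi)$ to the second (epic, monic) factorization $(\varphi\varphi_1, Z, \varphi_2)$ of $\varphi^2$ to get an invertible $\nu : Z \to Z$ with $\varphi_1\nu = \varphi\varphi_1$ and $\nu\varphi_2 = \varphi_2\varphi$; since $\varphi\varphi_1 = \varphi_1(\varphi_2\varphi_1)$, cancelling the epic $\varphi_1$ forces $\nu = \varphi_2\varphi_1 = u$, so $u$ is invertible.

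The part needing the most care is (iii) $\Rightarrow$ (ii): one has to insert the two hypothesised factorizations into the definition of essential uniqueness in the correct roles, keep in mind that the compatibility condition there reads $\varphi_1\nu = \varphi_1'$ (with $\nu$ composed on the right, in the paper's left-to-right convention), and then actually recognise the mediating isomorphism as $\varphi_2\varphi_1$ rather than leave it abstract. The rest is routine cancellation; the only genuine choices are the candidate inverses $\varphi_2(\varphi^{\#})^2\varphi_1$ in (i) $\Rightarrow$ (ii) and $\varphi_1 u^{-2}\varphi_2$ in (ii) $\Rightarrow$ (i), and picking these correctly is what keeps the verifications to one line each.
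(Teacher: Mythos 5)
This lemma is quoted in the paper as \cite[Theorem~1]{DW2} and is given no proof there, so there is nothing internal to compare against; judging your argument on its own terms, it is correct and complete. All four implications check out under the paper's left-to-right composition convention: the cancellation $\varphi_2\varphi^{\#}\varphi_1=1_Z$ is legitimate, $\varphi_2(\varphi^{\#})^2\varphi_1$ is indeed a two-sided inverse of $u=\varphi_2\varphi_1$ via $\varphi(\varphi^{\#})^2=(\varphi^{\#})^2\varphi=\varphi^{\#}$, the candidate $\chi=\varphi_1u^{-2}\varphi_2$ satisfies the three group-inverse identities by telescoping, the explicit one-sided inverses you exhibit let Lemma~\ref{ess.uniq} deliver (iii), and in (iii)$\Rightarrow$(ii) you correctly instantiate the definition of essential uniqueness with $(\varphi_1,Z,\varphi_2\varphi)$ in the distinguished role and identify the mediating isomorphism as $\varphi_2\varphi_1$ by cancelling the epic $\varphi_1$. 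This is essentially the standard argument of Robinson and Puystjens for their Theorem~1, and it is also consistent with how the present paper uses the lemma (e.g.\ the analogous computations in the proof of Theorem~\ref{co-inv.}). No gaps.
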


\begin{lem}\cite[Theorem~$2$ and Theorem~$3$]{PR2}\label{kkkk}
Let $\varphi : X \rightarrow Y$ be a morphism of a category with involution $\ast$.
If $(\varphi_1, Z, \varphi_2)$ is an (epic, monic) factorization of $\varphi$ through $Z$,
then the following statements are equivalent: \\
$(i)$ $\varphi$ has a Moore-Penrose inverse with respect to $\ast$,\\
$(ii)$ $\varphi^{\ast}\varphi_1$ is left invertible and $\varphi_2^{\ast}\varphi$ is right invertible,\\
$(iii)$ $(\varphi^{\ast}\varphi_1, Z, \varphi_2)$ and $(\varphi_1, Z, \varphi_2\varphi^{\ast})$ are,
respectively,
essentially unique (epic, monic) factorizations of $\varphi^{\ast}\varphi$ and $\varphi\varphi^{\ast}$ through $Z$,\\
$(iv)$ $\varphi_1^{\ast}\varphi_1$ and $\varphi_2\varphi_2^{\ast}$ are both invertible.\\
In this case,
$$\varphi^{\dagger}=\varphi_2^{\ast}(\varphi_2\varphi_2^{\ast})^{-1}(\varphi_1^{\ast}\varphi_1)^{-1}\varphi_1^{\ast}.$$
\end{lem}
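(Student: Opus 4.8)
The plan is to run all the equivalences through the concrete condition $(iv)$, which is the natural hub because the asserted expression for $\varphi^{\dagger}$ is built out of $(\varphi_1^{\ast}\varphi_1)^{-1}$ and $(\varphi_2\varphi_2^{\ast})^{-1}$. I will freely use the elementary facts available in any category with involution: $\varphi^{\ast}=\varphi_2^{\ast}\varphi_1^{\ast}$; the morphisms $\varphi_1^{\ast}\varphi_1$ and $\varphi_2\varphi_2^{\ast}$ are self-adjoint; $\ast$ interchanges "epic" with "monic" and "left invertible" with "right invertible"; if $ab$ is epic then $b$ is epic; if $ab$ is monic then $a$ is monic; and an endomorphism that is epic and right invertible (or monic and left invertible) is invertible. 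Two identities are used throughout: $\varphi^{\ast}\varphi_1=\varphi_2^{\ast}(\varphi_1^{\ast}\varphi_1)$ and $\varphi_2\varphi^{\ast}=(\varphi_2\varphi_2^{\ast})\varphi_1^{\ast}$ (and their adjoints).

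For $(iv)\Rightarrow(i)$ I would simply set $\chi=\varphi_2^{\ast}(\varphi_2\varphi_2^{\ast})^{-1}(\varphi_1^{\ast}\varphi_1)^{-1}\varphi_1^{\ast}$ and check the Penrose equations by telescoping: $\varphi\chi\varphi$ and $\chi\varphi\chi$ collapse because $\varphi_2\cdot\varphi_2^{\ast}(\varphi_2\varphi_2^{\ast})^{-1}$ and $(\varphi_1^{\ast}\varphi_1)^{-1}\varphi_1^{\ast}\cdot\varphi_1$ cancel, while $\varphi\chi=\varphi_1(\varphi_1^{\ast}\varphi_1)^{-1}\varphi_1^{\ast}$ and $\chi\varphi=\varphi_2^{\ast}(\varphi_2\varphi_2^{\ast})^{-1}\varphi_2$ are self-adjoint by self-adjointness of $\varphi_1^{\ast}\varphi_1$ and $\varphi_2\varphi_2^{\ast}$; by uniqueness of the Moore--Penrose inverse this yields $\chi=\varphi^{\dagger}$ and records the displayed formula. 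For the reverse $(i)\Rightarrow(iv)$: from $\varphi\varphi^{\dagger}\varphi=\varphi$, cancelling the epic $\varphi_1$ on the left and the monic $\varphi_2$ on the right gives $\varphi_2\varphi^{\dagger}\varphi_1=1_Z$. Writing $u=\varphi_2\varphi^{\dagger}$ and $v=\varphi^{\dagger}\varphi_1$, so that $u\varphi_1=\varphi_2 v=1_Z$, $\varphi\varphi^{\dagger}=\varphi_1 u$ and $\varphi^{\dagger}\varphi=v\varphi_2$, one feeds these into the self-adjointness of $\varphi_1 u$ and $v\varphi_2$ to obtain $(\varphi_1^{\ast}\varphi_1)u=\varphi_1^{\ast}$ and $v(\varphi_2\varphi_2^{\ast})=\varphi_2^{\ast}$, hence $(\varphi_1^{\ast}\varphi_1)(uu^{\ast})=(uu^{\ast})(\varphi_1^{\ast}\varphi_1)=1_Z$ and $(\varphi_2\varphi_2^{\ast})(v^{\ast}v)=(v^{\ast}v)(\varphi_2\varphi_2^{\ast})=1_Z$, which is $(iv)$.

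The equivalence $(ii)\Leftrightarrow(iv)$ is then formal. If $(iv)$ holds, $\varphi_2^{\ast}$ is left invertible (from invertibility of $\varphi_2\varphi_2^{\ast}$) and $\varphi_1^{\ast}\varphi_1$ is invertible, so $\varphi^{\ast}\varphi_1=\varphi_2^{\ast}(\varphi_1^{\ast}\varphi_1)$ is left invertible; symmetrically $\varphi_2\varphi^{\ast}=(\varphi_2\varphi_2^{\ast})\varphi_1^{\ast}$ is right invertible. Conversely, a left inverse of $\varphi_2^{\ast}(\varphi_1^{\ast}\varphi_1)$ provides a left inverse of $\varphi_1^{\ast}\varphi_1$, whose adjoint is a right inverse of the self-adjoint $\varphi_1^{\ast}\varphi_1$, so $\varphi_1^{\ast}\varphi_1$ is invertible, and likewise $\varphi_2\varphi_2^{\ast}$ from $\varphi_2\varphi^{\ast}$. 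Finally $(iv)\Rightarrow(iii)$ is immediate: invertibility of $\varphi_1^{\ast}\varphi_1$ and $\varphi_2\varphi_2^{\ast}$ makes $\varphi_1$ left invertible, $\varphi_2$ right invertible, $\varphi^{\ast}\varphi_1$ left invertible and $\varphi_2\varphi^{\ast}$ right invertible, so Lemma~\ref{ess.uniq} turns $(\varphi^{\ast}\varphi_1,Z,\varphi_2)$ and $(\varphi_1,Z,\varphi_2\varphi^{\ast})$ into essentially unique (epic, monic) factorizations of $\varphi^{\ast}\varphi$ and $\varphi\varphi^{\ast}$ through $Z$.

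The step I expect to be the main obstacle is $(iii)\Rightarrow(iv)$, since essential uniqueness of a factorization does not by itself make a factor one-sided invertible; one must exploit the $\ast$-symmetry of the data. From $(iii)$ the composite $\varphi^{\ast}\varphi_1=\varphi_2^{\ast}(\varphi_1^{\ast}\varphi_1)$ is epic, hence $\varphi_1^{\ast}\varphi_1$ is epic; applying $\ast$ to the epic $\varphi^{\ast}\varphi_1$ shows that $\varphi_1^{\ast}\varphi=(\varphi_1^{\ast}\varphi_1)\varphi_2$ is monic, so $(\varphi_2^{\ast},Z,(\varphi_1^{\ast}\varphi_1)\varphi_2)$ is a second (epic, monic) factorization of $\varphi^{\ast}\varphi$ through $Z$. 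Comparing it with $(\varphi^{\ast}\varphi_1,Z,\varphi_2)$ through the essential-uniqueness isomorphism $\nu:Z\rightarrow Z$ and cancelling the epic $\varphi_2^{\ast}$ gives $(\varphi_1^{\ast}\varphi_1)\nu=1_Z$; together with $\varphi_1^{\ast}\varphi_1$ being epic, $\varphi_1^{\ast}\varphi_1$ is invertible. The mirror argument, run with $\varphi_2\varphi^{\ast}=(\varphi_2\varphi_2^{\ast})\varphi_1^{\ast}$ (monic by $(iii)$, so $\varphi_2\varphi_2^{\ast}$ is monic and its adjoint $\varphi_1(\varphi_2\varphi_2^{\ast})$ is epic), the factorizations of $\varphi\varphi^{\ast}$, the second factorization $(\varphi_1(\varphi_2\varphi_2^{\ast}),Z,\varphi_1^{\ast})$, and cancellation of the epic $\varphi_1$, identifies the comparison isomorphism with $\varphi_2\varphi_2^{\ast}$ itself, whence $\varphi_2\varphi_2^{\ast}$ is invertible and $(iv)$ holds. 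This establishes all the equivalences, and the formula for $\varphi^{\dagger}$ has already been obtained in the step $(iv)\Rightarrow(i)$.
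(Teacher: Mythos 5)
Your proof is correct, but there is nothing in the paper to compare it against: Lemma~\ref{kkkk} is quoted verbatim from \cite[Theorems 2 and 3]{PR2} and no proof is reproduced, so what you have written is a self-contained substitute rather than a variant of the paper's argument. Your choice of $(iv)$ as the hub is sound, and every step checks out under the paper's left-to-right composition convention: the telescoping verification that $\chi=\varphi_2^{\ast}(\varphi_2\varphi_2^{\ast})^{-1}(\varphi_1^{\ast}\varphi_1)^{-1}\varphi_1^{\ast}$ satisfies the four Penrose equations; the derivation of $\varphi_2\varphi^{\dagger}\varphi_1=1_Z$ by epic/monic cancellation, and then of the two-sided inverses $uu^{\ast}$ and $v^{\ast}v$ from the self-adjointness of $\varphi\varphi^{\dagger}$ and $\varphi^{\dagger}\varphi$; the formal passage between $(ii)$ and $(iv)$ via $\varphi^{\ast}\varphi_1=\varphi_2^{\ast}(\varphi_1^{\ast}\varphi_1)$ and $\varphi_2\varphi^{\ast}=(\varphi_2\varphi_2^{\ast})\varphi_1^{\ast}$; the appeal to Lemma~\ref{ess.uniq} for $(iv)\Rightarrow(iii)$; and, in the only genuinely delicate step $(iii)\Rightarrow(iv)$, the construction of the second (epic, monic) factorizations $(\varphi_2^{\ast},Z,\varphi_1^{\ast}\varphi)$ and $(\varphi\varphi_2^{\ast},Z,\varphi_1^{\ast})$ by applying $\ast$, followed by reading off $(\varphi_1^{\ast}\varphi_1)\nu=1_Z$ and $\mu=\varphi_2\varphi_2^{\ast}$ from the comparison isomorphisms. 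Two minor observations. First, statement $(ii)$ as printed says that $\varphi_2^{\ast}\varphi$ is right invertible, which does not even compose under the stated convention ($\varphi_2^{\ast}:Y\rightarrow Z$ cannot be followed by $\varphi:X\rightarrow Y$); the paper's own gloss immediately after the lemma uses $\varphi_2\varphi^{\ast}$, so your silent correction is the intended reading and should perhaps be flagged explicitly. Second, in $(iii)\Rightarrow(iv)$ the comparison morphism $\nu$ supplied by essential uniqueness is already invertible, so $(\varphi_1^{\ast}\varphi_1)\nu=1_Z$ yields $\varphi_1^{\ast}\varphi_1=\nu^{-1}$ directly and the extra appeal to $\varphi_1^{\ast}\varphi_1$ being epic is redundant, though harmless.
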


From Lemma~\ref{kkkk},
we know that $\varphi$ has a Moore-Penrose inverse if and only if
$\varphi^{\ast}\varphi_1$ is left invertible and $\varphi_2\varphi^{\ast}$ is right invertible if and only if
both $\varphi_1^{\ast}\varphi_1$ and $\varphi_2\varphi_2^{\ast}$ are invertible.
Thus we can easily prove the following two lemmas in a similar way.

\begin{lem}\label{13inv.}
Let $\varphi : X \rightarrow Y$ be a morphism of a category with involution $\ast$.
If $(\varphi_1, Z, \varphi_2)$ is an (epic, monic) factorization of $\varphi$ through $Z$,
then $\varphi$ has a $\{1,3\}$-inverse with respect to $\ast$ if and only if $\varphi^{\ast}\varphi_1 : Y \rightarrow Z$ is left invertible.
\end{lem}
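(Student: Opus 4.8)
The plan is to turn $\{1,3\}$-invertibility of $\varphi$ into a single equation by means of Lemma~\ref{000}$(i)$ and then to push everything through the epic--monic factorization, in the same spirit as the proof of Lemma~\ref{kkkk}. Throughout I would write $\varphi=\varphi_1\varphi_2$, hence $\varphi^{\ast}=\varphi_2^{\ast}\varphi_1^{\ast}$, and I would use the following reformulation of Lemma~\ref{000}$(i)$: $\varphi$ has a $\{1,3\}$-inverse if and only if there is a morphism $\chi:Y\rightarrow X$ with $\chi^{\ast}\varphi^{\ast}\varphi=\varphi$.

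For sufficiency, assume $\varphi^{\ast}\varphi_1:Y\rightarrow Z$ is left invertible, say $\psi\varphi^{\ast}\varphi_1=1_Z$ with $\psi:Z\rightarrow Y$. I would set $\chi:=\psi^{\ast}\varphi_1^{\ast}:Y\rightarrow X$, so that $\chi^{\ast}=\varphi_1\psi$, and compute
\[
\chi^{\ast}\varphi^{\ast}\varphi=\varphi_1\psi\varphi^{\ast}\varphi_1\varphi_2=\varphi_1(\psi\varphi^{\ast}\varphi_1)\varphi_2=\varphi_1\varphi_2=\varphi ,
\]
so that Lemma~\ref{000}$(i)$ guarantees that $\chi$ is a $\{1,3\}$-inverse of $\varphi$. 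This half is a direct substitution.

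For necessity, assume $\varphi$ has a $\{1,3\}$-inverse $\chi:Y\rightarrow X$. I would extract two identities. From $\varphi\chi\varphi=\varphi$, written as $\varphi_1(\varphi_2\chi\varphi_1)\varphi_2=\varphi_1\varphi_2$, cancelling the epic $\varphi_1$ on the left and the monic $\varphi_2$ on the right gives $\varphi_2\chi\varphi_1=1_Z$. From Lemma~\ref{000}$(i)$ we have $\chi^{\ast}\varphi^{\ast}\varphi=\varphi$, i.e.\ $\chi^{\ast}\varphi^{\ast}\varphi_1\varphi_2=\varphi_1\varphi_2$, and cancelling the monic $\varphi_2$ gives $\chi^{\ast}\varphi^{\ast}\varphi_1=\varphi_1$. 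Then $\psi:=\varphi_2\chi\chi^{\ast}:Z\rightarrow Y$ is the desired left inverse, since
\[
\psi\varphi^{\ast}\varphi_1=\varphi_2\chi(\chi^{\ast}\varphi^{\ast}\varphi_1)=\varphi_2\chi\varphi_1=1_Z .
\]

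The only step needing a little care is the choice of $\psi$ in the necessity part: from $\chi^{\ast}\varphi^{\ast}\varphi_1=\varphi_1$ one cannot simply ``divide by $\varphi_1$ on the right'', because $\varphi_1$ is only assumed epic; instead one must feed in the identity $\varphi_2\chi\varphi_1=1_Z$ coming from equation $(1)$ to absorb the surplus $\varphi_1$, which is exactly the role of the prefactor $\varphi_2\chi$ in $\psi$. Everything else is bookkeeping with the involution and with (epic, monic) cancellation, and I would expect the same scheme, using Lemma~\ref{000}$(ii)$ and right invertibility in place of left invertibility, to yield the companion $\{1,4\}$ statement.
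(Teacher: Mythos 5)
Your proof is correct and follows essentially the same route as the paper: both directions hinge on Lemma~\ref{000}$(i)$ together with cancellation of the epic $\varphi_1$ and the monic $\varphi_2$, and your left inverse $\varphi_2\chi\chi^{\ast}$ is exactly the paper's $\varphi_2\varphi^{(1,3)}(\varphi^{(1,3)})^{\ast}$. The only cosmetic difference is that you split the necessity argument into the two identities $\varphi_2\chi\varphi_1=1_Z$ and $\chi^{\ast}\varphi^{\ast}\varphi_1=\varphi_1$ before combining them, whereas the paper obtains the same conclusion in a single chain of equalities.
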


\begin{proof}
Since
\begin{equation*}
\begin{split}
    \varphi_1\cdot1_Z\cdot\varphi_2
    &~=\varphi = \varphi\varphi^{(1,3)}\varphi\varphi^{(1,3)}\varphi = \varphi\varphi^{(1,3)}(\varphi\varphi^{(1,3)})^{\ast}\varphi \\
    &~=\varphi\varphi^{(1,3)}(\varphi^{(1,3)})^{\ast}\varphi^{\ast}\varphi = \varphi_1\varphi_2\varphi^{(1,3)}(\varphi^{(1,3)})^{\ast}\varphi^{\ast}\varphi_1\varphi_2,
\end{split}
\end{equation*}
$\varphi_1$ is epic and $\varphi_2$ monic,
then
\begin{equation*}
\begin{split}
\varphi_2\varphi^{(1,3)}(\varphi^{(1,3)})^{\ast}\varphi^{\ast}\varphi_1=1_Z.
\end{split}
\end{equation*}
Therefore,
$\varphi^{\ast}\varphi_1$ is left invertible.

Conversely,
there is a morphism $\mu : Z \rightarrow Y$ such that $\mu\varphi^{\ast}\varphi_1=1_Z$,
then
$$\varphi=\varphi_1\cdot1_Z\cdot\varphi_2=\varphi_1(\mu\varphi^{\ast}\varphi_1)\varphi_2=\varphi_1\mu\varphi^{\ast}\varphi,$$
thus $\varphi$ is $\{1,3\}$-invertible by Lemma~\ref{000}.
\end{proof}

Similarly,
we have a dual result for $\{1,4\}$-inverse.

\begin{lem}\label{14inv.}
Let $\varphi : X \rightarrow Y$ be a morphism of a category with involution $\ast$.
If $(\varphi_1, Z, \varphi_2)$ is an (epic, monic) factorization of $\varphi$ through $Z$,
then $\varphi$ has a $\{1,4\}$-inverse with respect to $\ast$ if and only if $\varphi_2\varphi^{\ast} : Z \rightarrow X$ is right invertible.
\end{lem}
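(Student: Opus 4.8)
The plan is to dualize the proof of Lemma~\ref{13inv.} almost verbatim, using Lemma~\ref{000}$(ii)$ in place of Lemma~\ref{000}$(i)$; the defining relations of a $\{1,4\}$-inverse are $\varphi\chi\varphi=\varphi$ and $(\chi\varphi)^{\ast}=\chi\varphi$, so every occurrence of ``$\varphi\varphi^{(1,3)}$'' in that argument should be replaced by ``$\varphi^{(1,4)}\varphi$'', and ``left invertible'' by ``right invertible''.

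For the forward implication, assume $\varphi$ has a $\{1,4\}$-inverse $\zeta:=\varphi^{(1,4)}:Y\rightarrow X$. First I would iterate $\varphi\zeta\varphi=\varphi$ once to get $\varphi=\varphi\cdot(\zeta\varphi)\cdot(\zeta\varphi)$, then apply the symmetry $(\zeta\varphi)^{\ast}=\zeta\varphi$ to the \emph{first} of the two bracketed factors, turning it into $\varphi^{\ast}\zeta^{\ast}$, so that
\[
\varphi=\varphi\varphi^{\ast}\zeta^{\ast}\zeta\varphi .
\]
Next I would substitute the factorization $\varphi=\varphi_1\varphi_2$ into the leading and the trailing copies of $\varphi$, obtaining $\varphi_1\cdot 1_Z\cdot\varphi_2=\varphi_1\varphi_2\varphi^{\ast}\zeta^{\ast}\zeta\varphi_1\varphi_2$; cancelling the epic $\varphi_1$ on the left and the monic $\varphi_2$ on the right yields $1_Z=\varphi_2\varphi^{\ast}(\zeta^{\ast}\zeta\varphi_1)$. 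Hence $\varphi_2\varphi^{\ast}:Z\rightarrow X$ is right invertible, with right inverse $\zeta^{\ast}\zeta\varphi_1$.

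For the converse, assume $\varphi_2\varphi^{\ast}\nu=1_Z$ for some $\nu:X\rightarrow Z$. Then
\[
\varphi=\varphi_1\cdot 1_Z\cdot\varphi_2=\varphi_1(\varphi_2\varphi^{\ast}\nu)\varphi_2=\varphi\varphi^{\ast}(\nu\varphi_2)=\varphi\varphi^{\ast}(\varphi_2^{\ast}\nu^{\ast})^{\ast},
\]
and Lemma~\ref{000}$(ii)$ then tells us that $\varphi_2^{\ast}\nu^{\ast}:Y\rightarrow X$ is a $\{1,4\}$-inverse of $\varphi$; in particular $\varphi$ is $\{1,4\}$-invertible.

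I do not anticipate a genuine obstacle: the computations are routine, and the one point needing care is the order-reversal of the involution under the left-to-right composition convention. Concretely, in the forward direction one must apply $(\zeta\varphi)^{\ast}=\zeta\varphi$ to the left-hand occurrence of $\zeta\varphi$, so that the produced $\varphi^{\ast}$ sits immediately after a full copy of $\varphi=\varphi_1\varphi_2$ rather than after a $\zeta$; this is precisely what makes the final cancellation of $\varphi_1$ and $\varphi_2$ possible. Tracking the source and target of each morphism also confirms the type assertions ($\varphi_2\varphi^{\ast}:Z\rightarrow X$, $\nu:X\rightarrow Z$, and $\varphi_2^{\ast}\nu^{\ast}:Y\rightarrow X$).
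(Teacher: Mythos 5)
Your proof is correct and is exactly what the paper intends: the paper gives no explicit proof of this lemma, stating only that it follows ``similarly'' to Lemma~\ref{13inv.}, and your argument is the faithful dualization of that proof (with Lemma~\ref{000}$(ii)$ replacing Lemma~\ref{000}$(i)$), including the correct handling of the order-reversal under the involution and the left-to-right composition convention.
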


Inspired by D.W. Robinson and R. Puystjens \cite{DW2},
we get some characterizations of the core invertibility of a morphism with an (epic, monic) factorization in a category $\mathscr{C}$.

\begin{thm} \label{co-inv.}
Let $\varphi : X \rightarrow X$ be a morphism of a category with involution $\ast$.
If $(\varphi_1, Z, \varphi_2)$ is an (epic, monic) factorization of $\varphi$ through $Z$,
then the following statements are equivalent: \\
(i) $\varphi$ has a core inverse with respect to $\ast$;\\
(ii) $\varphi^{\ast}\varphi_1 : X \rightarrow Z$ is left invertible and $\varphi_2\varphi_1 : Z \rightarrow Z$ is invertible;\\
(iii) $(\varphi^{\ast})^n\varphi_1 : X \rightarrow Z$ and $\varphi_2\varphi_1 : Z \rightarrow Z$ are both left invertible for any positive integer $n\geqslant 2$;\\
(iv) $((\varphi^{\ast})^2\varphi_1, Z, \varphi_2)$,
$(\varphi_2^{\ast}, Z, \varphi_1^{\ast}\varphi^{\ast}\varphi)$ and $(\varphi^{\ast}\varphi_2^{\ast}, Z, \varphi_1^{\ast}\varphi)$ are all essentially unique (epic, monic) factorizations of $(\varphi^{\ast})^2\varphi$ through $Z$.
\end{thm}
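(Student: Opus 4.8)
The plan is to establish the cycle (i) $\Rightarrow$ (iii) $\Rightarrow$ (ii) $\Rightarrow$ (i) first, and then to relate (ii) (or (iii)) to the factorization statement (iv). For (i) $\Rightarrow$ (iii), assume $\varphi^{\co}$ exists. By Lemma~\ref{222}(i), $\varphi$ is group invertible and $\{1,3\}$-invertible; Lemma~\ref{13inv.} already gives that $\varphi^{\ast}\varphi_1$ is left invertible, and iterating the identity $\varphi = \varphi_1\mu\varphi^{\ast}\varphi$ (or more directly, using Lemma~\ref{Li1}(i) with the morphism $\varepsilon$ satisfying $\varphi = \varepsilon(\varphi^{\ast})^n\varphi$, so that $\varphi_1\cdot 1_Z\cdot\varphi_2 = \varphi = \varepsilon(\varphi^{\ast})^n\varphi_1\varphi_2$, and cancelling $\varphi_2$ on the right using monic) shows $(\varphi^{\ast})^n\varphi_1$ is left invertible for every $n\geqslant 2$. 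Likewise, from Lemma~\ref{KKK} ($(i)\Rightarrow(ii)$) the group invertibility of $\varphi$ makes $\varphi_2\varphi_1$ invertible, hence a fortiori left invertible, which finishes (iii). The implication (iii) $\Rightarrow$ (ii) needs one extra observation: from $(\varphi^{\ast})^2\varphi_1$ left invertible and $\varphi_2\varphi_1$ left invertible one must upgrade $\varphi_2\varphi_1$ to \emph{invertible}. I expect this to be the main obstacle. The idea is that $\varphi_2\varphi_1$ left invertible together with the $\{1,3\}$-part (which $(\varphi^{\ast})^2\varphi_1$ left invertible yields, since it forces $\varphi^{\ast}\varphi_1$ left invertible after writing $(\varphi^{\ast})^2\varphi_1 = \varphi^{\ast}\cdot\varphi^{\ast}\varphi_1$ and using that a composite being left invertible makes the last factor... no --- rather one argues directly that $\varphi = \tau\varphi^2$ for a suitable $\tau$, forcing $\varphi_2\varphi_1$ right invertible as in the proof of Lemma~\ref{KKK}) combine to give $\varphi^{\#}$, whence Lemma~\ref{KKK} returns $\varphi_2\varphi_1$ invertible. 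Concretely: $(\varphi^{\ast})^2\varphi_1$ left invertible gives, via Lemma~\ref{000}(i) applied after factoring, a $\{1,3\}$-inverse and in particular an equation $\varphi = \varepsilon(\varphi^{\ast})^2\varphi$; $\varphi_2\varphi_1$ left invertible means $\exists\,\sigma$ with $\sigma\varphi_2\varphi_1 = 1_Z$, so $\varphi_1\sigma\varphi_2\varphi_1\varphi_2 = \varphi_1\varphi_2 = \varphi$, i.e. $(\varphi_1\sigma\varphi_2)\varphi^2 = \varphi$; by Lemma~\ref{Li1}(i) (with $n=2$) these two equations give $\varphi^{\co}$, hence by Lemma~\ref{222}(i) $\varphi^{\#}$ exists, hence by Lemma~\ref{KKK} $\varphi_2\varphi_1$ is invertible. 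That proves (iii) $\Rightarrow$ (ii), and simultaneously (ii) $\Rightarrow$ (i) (the same computation with $\varphi_2\varphi_1$ now known invertible directly produces the core inverse via Lemma~\ref{Li1}(i)).

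For the equivalence with (iv), I would use Lemma~\ref{ess.uniq}: an (epic, monic) factorization with first factor left invertible and second factor right invertible is automatically essentially unique. So I must check, assuming (ii), that each of the three triples is a genuine factorization of $(\varphi^{\ast})^2\varphi$ with left-invertible first component and right-invertible second component.

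First, $((\varphi^{\ast})^2\varphi_1, Z, \varphi_2)$: clearly $(\varphi^{\ast})^2\varphi_1\varphi_2 = (\varphi^{\ast})^2\varphi$; $(\varphi^{\ast})^2\varphi_1$ is left invertible by (iii)/(ii); and $\varphi_2$ is right invertible because $\varphi_2\varphi_1 = \varphi_2\varphi_1$ invertible forces $\varphi_2$ to be right invertible (with right inverse $\varphi_1(\varphi_2\varphi_1)^{-1}$). Second, $(\varphi_2^{\ast}, Z, \varphi_1^{\ast}\varphi^{\ast}\varphi)$: the product is $\varphi_2^{\ast}\varphi_1^{\ast}\varphi^{\ast}\varphi = (\varphi_1\varphi_2)^{\ast}\varphi^{\ast}\varphi = \varphi^{\ast}\varphi^{\ast}\varphi = (\varphi^{\ast})^2\varphi$; $\varphi_2^{\ast}$ is left invertible because $\varphi_2$ is right invertible (take the adjoint of a right inverse); and $\varphi_1^{\ast}\varphi^{\ast}\varphi$ is right invertible because its adjoint $\varphi^{\ast}\varphi\varphi_1 = \varphi^{\ast}\varphi_1\varphi_2\varphi_1$ is left invertible --- indeed $\varphi^{\ast}\varphi_1$ is left invertible (write $(\varphi^{\ast})^2\varphi_1 = \varphi^{\ast}(\varphi^{\ast}\varphi_1)$; a right-cancellable-from-left argument, or simply note Lemma~\ref{13inv.} gives it from the existence of a $\{1,3\}$-inverse) and $\varphi_2\varphi_1$ is invertible, so the composite $(\varphi^{\ast}\varphi_1)(\varphi_2\varphi_1)$ is left invertible, and taking adjoints makes $\varphi_1^{\ast}\varphi^{\ast}\varphi = (\varphi_1^{\ast}\varphi_2^{\ast})(\varphi_1^{\ast}\varphi^{\ast})$... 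I will instead verify right invertibility directly by exhibiting the inverse built from $(\varphi_2\varphi_1)^{-1}$ and the left inverse of $\varphi^{\ast}\varphi_1$. Third, $(\varphi^{\ast}\varphi_2^{\ast}, Z, \varphi_1^{\ast}\varphi)$: the product is $\varphi^{\ast}\varphi_2^{\ast}\varphi_1^{\ast}\varphi = \varphi^{\ast}(\varphi_1\varphi_2)^{\ast}\varphi = \varphi^{\ast}\varphi^{\ast}\varphi$; $\varphi^{\ast}\varphi_2^{\ast}$ is left invertible and $\varphi_1^{\ast}\varphi$ right invertible by the adjoint versions of the facts just used. Lemma~\ref{ess.uniq} then upgrades all three to essentially unique (epic, monic) factorizations, giving (ii)/(iii) $\Rightarrow$ (iv).

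Finally, for (iv) $\Rightarrow$ (ii), I only need that the first triple being an (epic, monic) factorization makes $(\varphi^{\ast})^2\varphi_1$ epic --- and epic morphisms in these arguments behave like "left invertible" only after one extracts the standard factorization data, so instead I extract invertibility of $\varphi_2\varphi_1$ from \emph{essential uniqueness}: comparing the given factorization $((\varphi^{\ast})^2\varphi_1, Z, \varphi_2)$ of $(\varphi^{\ast})^2\varphi$ against the factorization obtained from $(\varphi_2^{\ast}, Z, \varphi_1^{\ast}\varphi^{\ast}\varphi)$ yields an invertible $\nu:Z\to Z$ with $(\varphi^{\ast})^2\varphi_1\nu = \varphi_2^{\ast}$ and $\nu\varphi_1^{\ast}\varphi^{\ast}\varphi = \varphi_2$; composing the second with $\varphi_1$ on the left and comparing with the third factorization extracts, after a short manipulation using $\varphi = \varphi_1\varphi_2$, that $\varphi_2\varphi_1$ has a two-sided inverse, and that $\varphi^{\ast}\varphi_1$ (hence $(\varphi^{\ast})^2\varphi_1$) is left invertible. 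The delicate point here, and the second place I expect trouble, is squeezing \emph{left} invertibility of $(\varphi^{\ast})^2\varphi_1$ out of pure factorization data; the resolution is that once $\varphi_2\varphi_1$ is invertible, $\varphi_2$ is right invertible and $\varphi_1$ is left invertible, so the equation $(\varphi^{\ast})^2\varphi_1\nu = \varphi_2^{\ast}$ shows $(\varphi^{\ast})^2\varphi_1$ is left invertible (being $\varphi_2^{\ast}\nu^{-1}$ times... precisely, $(\varphi^{\ast})^2\varphi_1 = \varphi_2^{\ast}\nu^{-1}$ and $\varphi_2^{\ast}$ is left invertible, $\nu^{-1}$ invertible, so the product is left invertible). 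That closes the cycle, and the displayed formula for $\varphi^{\co}$ follows from Lemma~\ref{Li1}(i) with $n=2$, reading off $\varepsilon$ and $\varphi^{n-1}=\varphi$ from the explicit one-sided inverses constructed above.
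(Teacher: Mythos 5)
Your overall strategy coincides with the paper's: reduce $(i)\Leftrightarrow(ii)$ to Lemmas~\ref{222}, \ref{KKK} and \ref{13inv.}, prove $(i)\Leftrightarrow(iii)$ by exhibiting explicit one-sided inverses and invoking Lemma~\ref{Li1}, and handle $(iv)$ via Lemma~\ref{ess.uniq} in one direction and essential-uniqueness comparisons in the other. However, three steps as written do not quite close, and each needs a concrete repair. First, in $(i)\Rightarrow(iii)$ you claim that cancelling the monic $\varphi_2$ in $\varphi_1\varphi_2=\varepsilon(\varphi^{\ast})^n\varphi_1\varphi_2$ ``shows $(\varphi^{\ast})^n\varphi_1$ is left invertible.'' Cancelling only yields $\varphi_1=\varepsilon(\varphi^{\ast})^n\varphi_1$, which is not a left-invertibility statement for $(\varphi^{\ast})^n\varphi_1$. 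You must additionally precompose with a left inverse of $\varphi_1$ --- which you do have, namely $(\varphi_2\varphi_1)^{-1}\varphi_2$, since $\varphi_2\varphi_1$ is invertible by Lemma~\ref{KKK} --- to obtain $\bigl[(\varphi_2\varphi_1)^{-1}\varphi_2\varepsilon\bigr](\varphi^{\ast})^n\varphi_1=1_Z$. (The paper sidesteps this by arranging the identity so that $\varphi_1$ sits on the outside left and $\varphi_2$ on the outside right before cancelling both.) Second, your parenthetical proof of $(ii)\Rightarrow(i)$ appeals to Lemma~\ref{Li1}(i), but $(ii)$ only supplies $\varphi=\varphi_1\mu\varphi^{\ast}\varphi$, the $n=1$ instance, while that lemma requires $n\geqslant 2$; the correct route is the paper's: Lemma~\ref{13inv.} gives $\{1,3\}$-invertibility, Lemma~\ref{KKK} gives group invertibility, and Lemma~\ref{222} assembles the core inverse.

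Third, and most seriously, in $(iv)\Rightarrow(ii)$ the crucial claim that $\varphi_2\varphi_1$ is invertible is dismissed as ``a short manipulation''; this is the heart of that direction and must be carried out. The manipulation is: comparing $(\varphi_2^{\ast}, Z, \varphi_1^{\ast}\varphi^{\ast}\varphi)$ with $(\varphi^{\ast}\varphi_2^{\ast}, Z, \varphi_1^{\ast}\varphi)$ via essential uniqueness produces an invertible $\sigma : Z\rightarrow Z$ with $\varphi_2^{\ast}\sigma=\varphi^{\ast}\varphi_2^{\ast}$; applying $\ast$ gives $\sigma^{\ast}\varphi_2=\varphi_2\varphi=(\varphi_2\varphi_1)\varphi_2$, and right-cancelling the monic $\varphi_2$ gives $\varphi_2\varphi_1=\sigma^{\ast}$, which is invertible. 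Your subsequent deduction that $(\varphi^{\ast})^2\varphi_1=\varphi_2^{\ast}\nu^{-1}$ is left invertible (because $\varphi_2^{\ast}$ is left invertible once $\varphi_2$ is right invertible) is then fine, and it yields $(ii)$ rather than the full statement $(iii)$; if you want $(iv)\Rightarrow(iii)$ directly, the paper additionally iterates the relation $\sigma\varphi_1^{\ast}\varphi_1=\varphi_1^{\ast}\varphi^{\ast}\varphi_1$ to get $1_Z=\rho\sigma^{-n}\varphi_1^{\ast}(\varphi^{\ast})^n\varphi_1$, but your cycle through $(ii)$ makes this unnecessary. With these repairs the argument matches the paper's proof.
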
 \label{core-inverse-1}

\begin{proof}
$(i) \Leftrightarrow (ii).$
By Lemma~\ref{222},
$\varphi$ is core invertible if and only if $\varphi$ is group invertible and $\{1,3\}$-invertible.
Moreover,
$\varphi$ is group invertible if and only $\varphi_2\varphi_1 : Z \rightarrow Z$ is invertible by Lemma~\ref{KKK},
and $\varphi$ is $\{1,3\}$-invertible if and only if $\varphi^{\ast}\varphi_1 : X \rightarrow Z$ is left invertible by Lemma~\ref{13inv.}.
In conclusion,
$\varphi$ is core invertible if and only if $\varphi^{\ast}\varphi_1 : X \rightarrow Z$ is left invertible and $\varphi_2\varphi_1 : Z \rightarrow Z$ is invertible.

$(i) \Rightarrow (iii).$
Since
\begin{equation*}
\begin{split}
    \varphi_1\cdot1_Z\cdot\varphi_2
    &~=\varphi = \varphi\varphi^{\co}\varphi = \varphi(\varphi^{\co}\varphi\varphi^{\co})\varphi = \varphi\varphi^{\co}(\varphi\varphi^{\co})^{\ast}\varphi\\
    &~=\varphi\varphi^{\co}(\varphi^{\co})^{\ast}\varphi^{\ast}\varphi = \varphi\varphi^{\co}(\varphi\varphi^{\co}\varphi^{\co})^{\ast}\varphi^{\ast}\varphi\\
    &~=\varphi_1\varphi_2\varphi^{\co}(\varphi^{\co})^{\ast}(\varphi^{\co})^{\ast}(\varphi^{\ast})^2\varphi_1\varphi_2\\
    &~=\varphi_1\varphi_2\varphi^{\co}(\varphi^{\co})^{\ast}(\varphi\varphi^{\co}\varphi^{\co})^{\ast}(\varphi^{\ast})^2\varphi_1\varphi_2\\
    &~=\varphi_1\varphi_2\varphi^{\co}(\varphi^{\co})^{\ast}((\varphi^{\co})^2)^{\ast}(\varphi^{\ast})^3\varphi_1\varphi_2\\
    &~=\cdots\\
    &~=\varphi_1\varphi_2\varphi^{\co}(\varphi^{\co})^{\ast}((\varphi^{\co})^{n-1})^{\ast}(\varphi^{\ast})^n\varphi_1\varphi_2,
\end{split}
\end{equation*}
$\varphi_1$ is epic and $\varphi_2$ is monic,
then
\begin{equation*}
\begin{split}
\varphi_2\varphi^{\co}(\varphi^{\co})^{\ast}((\varphi^{\co})^{n-1})^{\ast}(\varphi^{\ast})^n\varphi_1=1_Z,
\end{split}
\end{equation*}
thus $(\varphi^{\ast})^n\varphi_1 : X \rightarrow Z$ is left invertible for any $n\geqslant 2$.

Similarly,
from
\begin{equation*}
\begin{split}
    \varphi_1\cdot1_Z\cdot\varphi_2
    &~=\varphi = \varphi\varphi^{\co}\varphi = \varphi\varphi^{\co}(\varphi^{\co}\varphi^2) = \varphi_1\varphi_2\varphi^{\co}\varphi^{\co}\varphi_1\varphi_2\varphi_1\varphi_2,
\end{split}
\end{equation*}
$\varphi_1$ is epic and $\varphi_2$ is monic,
we obtain
\begin{equation}\label{D1}
\begin{split}
\varphi_2\varphi^{\co}\varphi^{\co}\varphi_1\varphi_2\varphi_1=1_Z,
\end{split}
\end{equation}
thus $\varphi_2\varphi_1 : Z \rightarrow Z$ is left invertible.

$(iii) \Rightarrow (i).$
Suppose that $(\varphi^{\ast})^n\varphi_1 : X \rightarrow Z$ and $\varphi_2\varphi_1 : Z \rightarrow Z$ are both left invertible for any $n\geqslant 2$,
then there exist $\mu : Z \rightarrow X$ and $\nu : Z \rightarrow Z$ such that
\begin{equation*}
\begin{split}
\mu(\varphi^{\ast})^n\varphi_1 = 1_Z = \nu\varphi_2\varphi_1.
\end{split}
\end{equation*}
Therefore,
\begin{equation*}
\begin{split}
\varphi
&~= \varphi_1\cdot1_Z\cdot\varphi_2 = \varphi_1(\nu\varphi_2\varphi_1)\varphi_2 = \varphi_1\nu(\nu\varphi_2\varphi_1)\varphi_2\varphi_1\varphi_2 = \varphi_1\nu^2\varphi_2\varphi^2\\
&~= \varphi_1\nu^2(\nu\varphi_2\varphi_1)\varphi_2\varphi^2 = \varphi_1\nu^3\varphi_2\varphi^3 = \cdots =\varphi_1\nu^n\varphi_2\varphi^n,
\end{split}
\end{equation*}
\begin{equation*}
\begin{split}
\varphi = \varphi_1\cdot1_Z\cdot\varphi_2 = \varphi_1(\mu(\varphi^{\ast})^n\varphi_1)\varphi_2 = \varphi_1\mu(\varphi^{\ast})^n\varphi.
\end{split}
\end{equation*}
Thus $\varphi$ has a core inverse with $\varphi^{\co}=\varphi^{n-1}(\varphi_1\mu)^{\ast}$ by Lemma~\ref{Li1}.

$((i) \Leftrightarrow (iii)) \Rightarrow (iv).$
When taking $n=2$,
$\varphi$ has a core inverse if and only if $(\varphi^{\ast})^2\varphi_1 : X \rightarrow Z$ and $\varphi_2\varphi_1 : Z \rightarrow Z$ are both left invertible.
To begin with,
since $(\varphi^{\ast})^2\varphi_1$ is left invertible,
and equality (\ref{D1}) shows that $\varphi_2$ is right invertible,
thus $((\varphi^{\ast})^2\varphi_1, Z, \varphi_2)$ is an essentially unique (epic, monic) factorization of $(\varphi^{\ast})^2\varphi$ through $Z$ by Lemma~\ref{ess.uniq}.

Next,
$\varphi_2$ is right invertible,
which gives that $(\varphi_2)^{\ast}$ is left invertible.
In addition,
equality (\ref{D1}) gives
\begin{equation*}
\begin{split}
   1_Z = \varphi_2\varphi^{\co}\varphi^{\co}\varphi_1\varphi_2\varphi_1 = \varphi_2\varphi^{\co}(\varphi^{\co}\varphi\varphi^{\co})\varphi\varphi_1 = \varphi_2\varphi^{\co}\varphi^{\co}(\varphi^{\co})^{\ast}\varphi^{\ast}\varphi\varphi_1,
\end{split}
\end{equation*}
so $\varphi^{\ast}\varphi\varphi_1$ is left invertible,
that is to say,
$\varphi_1^{\ast}\varphi^{\ast}\varphi$ is right invertible.
Therefore,
$(\varphi_2^{\ast}, Z, \varphi_1^{\ast}\varphi^{\ast}\varphi)$ is an essentially unique (epic, monic) factorization of $(\varphi^{\ast})^2\varphi$ through $Z$ by Lemma~\ref{ess.uniq}.

Finally,
since $(\varphi^{\ast})^2\varphi_1$ is left invertible,
then $\varphi^{\ast}\varphi_1$ is also left invertible,
which implies that $\varphi_1^{\ast}\varphi$ is right invertible.
In addition,
equality (\ref{D1}) shows
\begin{equation*}
\begin{split}
   1_Z = \varphi_2\varphi^{\co}\varphi^{\co}\varphi_1\varphi_2\varphi_1 = \varphi_2(\varphi\varphi^{\co}\varphi^{\co})\varphi^{\co}\varphi\varphi_1,
\end{split}
\end{equation*}
thus $\varphi_2\varphi$ is right invertible,
that is to say,
$\varphi^{\ast}\varphi_2^{\ast}$ is left invertible.
Hence $(\varphi^{\ast}\varphi_2^{\ast}, Z, \varphi_1^{\ast}\varphi)$ is an essentially unique (epic, monic) factorization of $(\varphi^{\ast})^2\varphi$ through $Z$ by Lemma~\ref{ess.uniq}.

$(iv) \Rightarrow (iii).$
Suppose that $((\varphi^{\ast})^2\varphi_1, Z, \varphi_2)$,
$(\varphi^{\ast}\varphi_2^{\ast}, Z, \varphi_1^{\ast}\varphi)$ and $(\varphi_2^{\ast}, Z, \varphi_1^{\ast}\varphi^{\ast}\varphi)$ are essentially unique (epic, monic) factorizations of $(\varphi^{\ast})^2\varphi$ through $Z$.
In particular,
there exist invertible morphisms $\rho: Z \rightarrow Z, \sigma : Z \rightarrow Z$ such that
\begin{equation*}
\begin{split}
   (\varphi^{\ast})^2\varphi_1\rho = \varphi^{\ast}\varphi_2^{\ast},
\end{split}
\end{equation*}
\begin{equation}\label{D2}
\begin{split}
   \rho\varphi_1^{\ast}\varphi = \varphi_2,
\end{split}
\end{equation}
\begin{equation}\label{D3}
\begin{split}
   \varphi_2^{\ast}\sigma = \varphi^{\ast}\varphi_2^{\ast},
\end{split}
\end{equation}
and
\begin{equation}\label{D4}
\begin{split}
   \sigma\varphi_1^{\ast}\varphi = \varphi_1^{\ast}\varphi^{\ast}\varphi.
\end{split}
\end{equation}
By calculation,
we have
\begin{equation*}
\begin{split}
   \varphi_2 &~\stackrel{(\ref{D2})}{=} \rho\varphi_1^{\ast}\varphi =\rho\varphi_1^{\ast}\varphi_1\varphi_2,
\end{split}
\end{equation*}
\begin{equation*}
\begin{split}
   \varphi_2\varphi_1\varphi_2 = \varphi_2\varphi = (\varphi^{\ast}\varphi_2^{\ast})^{\ast} &~\stackrel{(\ref{D3})}{=} (\varphi_2^{\ast}\sigma)^{\ast} = \sigma^{\ast}\varphi_2,
\end{split}
\end{equation*}
and
\begin{equation*}
\begin{split}
   \sigma\varphi_1^{\ast}\varphi_1\varphi_2 = \sigma\varphi_1^{\ast}\varphi &~\stackrel{(\ref{D4})}{=} \varphi_1^{\ast}\varphi^{\ast}\varphi = \varphi_1^{\ast}\varphi^{\ast}\varphi_1\varphi_2.
\end{split}
\end{equation*}
Since $\varphi_2$ is monic,
then $1_Z = \rho\varphi_1^{\ast}\varphi_1$,
$\varphi_2\varphi_1 = \sigma^{\ast}$ and $\sigma\varphi_1^{\ast}\varphi_1 = \varphi_1^{\ast}\varphi^{\ast}\varphi_1$.
Thus $\varphi_2\varphi_1$ is invertible follows from $\varphi_2\varphi_1 = \sigma^{\ast}$.
Moreover,
$\sigma\varphi_1^{\ast}\varphi_1 = \varphi_1^{\ast}\varphi^{\ast}\varphi_1$ implies $\varphi_1^{\ast}\varphi_1 = \sigma^{-1}\varphi_1^{\ast}\varphi^{\ast}\varphi_1$,
then
\begin{equation*}
\begin{split}
      1_Z
   &~= \rho\varphi_1^{\ast}\varphi_1 = \rho\sigma^{-1}\varphi_1^{\ast}\varphi^{\ast}\varphi_1 = \rho\sigma^{-1}(\sigma^{-1}\sigma)\varphi_1^{\ast}\varphi^{\ast}\varphi_1 = \rho\sigma^{-2}(\sigma^{\ast})^{\ast}\varphi_1^{\ast}\varphi^{\ast}\varphi_1\\
   &~= \rho\sigma^{-2}(\varphi_2\varphi_1)^{\ast}\varphi_1^{\ast}\varphi^{\ast}\varphi_1 = \rho\sigma^{-2}\varphi_1^{\ast}(\varphi^{\ast})^2\varphi_1 = \rho\sigma^{-2}(\sigma^{-1}\sigma)\varphi_1^{\ast}(\varphi^{\ast})^2\varphi_1\\
   &~= \rho\sigma^{-3}(\sigma^{\ast})^{\ast}\varphi_1^{\ast}(\varphi^{\ast})^2\varphi_1 = \rho\sigma^{-3}(\varphi_2\varphi_1)^{\ast}\varphi_1^{\ast}(\varphi^{\ast})^2\varphi_1\\
   &~= \rho\sigma^{-3}\varphi_1^{\ast}(\varphi^{\ast})^3\varphi_1 = \cdots = \rho\sigma^{-n}\varphi_1^{\ast}(\varphi^{\ast})^n\varphi_1,
\end{split}
\end{equation*}
hence $(\varphi^{\ast})^n\varphi_1$ is left invertible.
\end{proof}

The following theorem is a corresponding result for dual core inverse.

\begin{thm} \label{dual-core-inverse-1}
Let $\varphi : X \rightarrow X$ be a morphism of a category with involution.
If $(\varphi_1, Z, \varphi_2)$ is an (epic, monic) factorization of $\varphi$ through $Z$,
then the following statements are equivalent: \\
(i) $\varphi$ has a dual core inverse with respect to $\ast$;\\
(ii) $\varphi_2\varphi^{\ast} : Z \rightarrow X$ is right invertible and $\varphi_2\varphi_1 : Z \rightarrow Z$ is invertible;\\
(iii) $\varphi_2(\varphi^{\ast})^n : Z \rightarrow X$ and $\varphi_2\varphi_1 : Z \rightarrow Z$ are both right invertible for any positive integer $n\geqslant 2$;\\
(iv) $(\varphi_1, Z, \varphi_2(\varphi^{\ast})^2)$,
$(\varphi\varphi^{\ast}\varphi_2^{\ast}, Z, \varphi_1^{\ast})$ and $(\varphi\varphi_2^{\ast}, Z, \varphi_1^{\ast}\varphi^{\ast})$ are all essentially unique (epic, monic) factorizations of $\varphi(\varphi^{\ast})^2$ through $Z$.
\end{thm}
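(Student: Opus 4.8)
My plan is to derive Theorem~\ref{dual-core-inverse-1} from Theorem~\ref{co-inv.} by passing to the opposite category. Put the same operation $\ast$ on $\mathscr{C}^{\mathrm{op}}$; because $(\varphi\psi)^{\ast}=\psi^{\ast}\varphi^{\ast}$ in $\mathscr{C}$ while composition is reversed in $\mathscr{C}^{\mathrm{op}}$, this is again an involution, and $(\varphi_2,Z,\varphi_1)$ is an (epic, monic) factorization of $\varphi$ in $\mathscr{C}^{\mathrm{op}}$. Comparing the five defining equations termwise shows that $\varphi$ is core invertible in $\mathscr{C}^{\mathrm{op}}$ precisely when it is dual core invertible in $\mathscr{C}$, with the same inverse. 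I would then apply Theorem~\ref{co-inv.} to $\varphi$ in $\mathscr{C}^{\mathrm{op}}$ and translate each clause back, using that passage to $\mathscr{C}^{\mathrm{op}}$ interchanges ``epic'' with ``monic'' and ``left invertible'' with ``right invertible'', fixes ``invertible'', and sends a factorization triple $(\alpha,Z,\beta)$ to $(\beta,Z,\alpha)$. A short computation --- e.g.\ $(\varphi^{\ast})^{\cdot^{\mathrm{op}}n}\cdot^{\mathrm{op}}\varphi_2$ is the $\mathscr{C}$-morphism $\varphi_2(\varphi^{\ast})^{n}$, the $\mathscr{C}^{\mathrm{op}}$-triple $\big((\varphi^{\ast})^{2}\cdot^{\mathrm{op}}\varphi_2,\,Z,\,\varphi_1\big)$ corresponds to the $\mathscr{C}$-triple $(\varphi_1,Z,\varphi_2(\varphi^{\ast})^{2})$, and $(\varphi^{\ast})^{2}\cdot^{\mathrm{op}}\varphi=\varphi(\varphi^{\ast})^{2}$ --- then turns clauses $(i)$--$(iv)$ of Theorem~\ref{co-inv.} into exactly clauses $(i)$--$(iv)$ above.

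If one wants to stay inside $\mathscr{C}$, the alternative is to rerun the proof of Theorem~\ref{co-inv.} verbatim with the dual data: the relations $\varphi\varphi_{\co}\varphi=\varphi$, $\varphi_{\co}\varphi\varphi_{\co}=\varphi_{\co}$, $(\varphi_{\co}\varphi)^{\ast}=\varphi_{\co}\varphi$, $\varphi_{\co}^{2}\varphi=\varphi_{\co}$, $\varphi^{2}\varphi_{\co}=\varphi$ in place of the core ones, and Lemma~\ref{222}(ii), Lemma~\ref{KKK}, Lemma~\ref{14inv.}, Lemma~\ref{Li1}(ii) in place of the lemmas invoked there. Thus $(i)\Leftrightarrow(ii)$ is immediate; for $(i)\Rightarrow(iii)$ one gets $\varphi=\varphi(\varphi^{\ast})^{n}(\varphi_{\co}^{\ast})^{n}\varphi_{\co}\varphi$ (from $\varphi=\varphi\varphi^{\ast}\varphi_{\co}^{\ast}\varphi_{\co}\varphi$ and repeated use of $\varphi_{\co}^{\ast}=\varphi^{\ast}(\varphi_{\co}^{\ast})^{2}$) and $\varphi=\varphi^{2}\varphi_{\co}^{2}\varphi$, substitutes $\varphi=\varphi_1\varphi_2$ and cancels the epic $\varphi_1$ on the left and the monic $\varphi_2$ on the right to reach $1_{Z}=\varphi_2(\varphi^{\ast})^{n}(\varphi_{\co}^{\ast})^{n}\varphi_{\co}\varphi_1$ and $1_{Z}=\varphi_2\varphi\varphi_{\co}^{2}\varphi_1$; for $(iii)\Rightarrow(i)$ one fixes right inverses $\varphi_2(\varphi^{\ast})^{n}\mu=1_{Z}=\varphi_2\varphi_1\nu$ and checks $\varphi=\varphi(\varphi^{\ast})^{n}(\mu\varphi_2)=\varphi^{n}(\varphi_1\nu^{n}\varphi_2)$, so that Lemma~\ref{Li1}(ii) applies and $\varphi_{\co}=\varphi_2^{\ast}\mu^{\ast}\varphi^{n-1}$; for $((i)\Leftrightarrow(iii))\Rightarrow(iv)$ one rewrites $1_{Z}=\varphi_2\varphi\varphi_{\co}^{2}\varphi_1$ (inserting $\varphi=\varphi\varphi_{\co}\varphi$ and using $\varphi_{\co}\varphi=\varphi^{\ast}\varphi_{\co}^{\ast}$, or using $\varphi_{\co}^{2}=\varphi_{\co}^{3}\varphi$) to read off that $\varphi_1$, $\varphi\varphi^{\ast}\varphi_2^{\ast}$, $\varphi\varphi_1$ and $\varphi\varphi_2^{\ast}$ are left invertible (the last from $(ii)$) and $\varphi_2(\varphi^{\ast})^{2}$ is right invertible, whence (since $\ast$ interchanges left and right invertibility) each of the three triples has left invertible first entry and right invertible second entry and Lemma~\ref{ess.uniq} makes it an essentially unique (epic, monic) factorization of $\varphi(\varphi^{\ast})^{2}$; and for $(iv)\Rightarrow(iii)$ one compares the first factorization with the third and the second with the third, extracts from essential uniqueness invertible $\rho,\sigma:Z\to Z$, cancels the monic $\varphi_2$ to get $\varphi_2\varphi_1=\sigma^{\ast}$ and a relation $1_{Z}=\rho(\cdots)$, and runs the dualised form of the inductive chain in the proof of Theorem~\ref{co-inv.}.

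The single delicate point, on either route, is dualising the $(iv)\Rightarrow(iii)$ step correctly: the symmetry in play is passage to $\mathscr{C}^{\mathrm{op}}$ rather than application of $\ast$, so the side carrying each factor and every instance of ``epic''/``monic'' and ``left''/``right invertible'' flips, and mis-pairing them is the obvious hazard --- which is exactly why I would prefer the opposite-category argument, where this bookkeeping is done once. Everything else is routine, including the check that the three triples in $(iv)$ really are factorizations of $\varphi(\varphi^{\ast})^{2}$, which follows at once from $\varphi_1\varphi_2=\varphi$ and $\varphi^{\ast}=\varphi_2^{\ast}\varphi_1^{\ast}$.
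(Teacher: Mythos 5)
Your proposal is correct and is essentially the paper's own (implicit) argument: the paper states this theorem without proof as ``a corresponding result for dual core inverse,'' i.e.\ it is obtained from Theorem~\ref{co-inv.} by duality, which is exactly what you carry out. Your opposite-category translation of each clause checks out (in particular $(\varphi^{\ast})^{2}\cdot^{\mathrm{op}}\varphi_2$ and the three triples land on the stated data), and your in-$\mathscr{C}$ rerun with $\varphi_{\co}\varphi=\varphi^{\ast}\varphi_{\co}^{\ast}$ and $\varphi_{\co}=\varphi_{\co}^{2}\varphi$ is the faithful dual of the printed proof.
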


In \cite{Chen} and \cite{Li},
authors characterized  the coexistence of core inverse and dual core inverse of a regular element by units in a $\ast$-ring.
And we give characterizations of the coexistence of core inverse and dual core inverse of a morphism with an (epic, monic) factorization in a category $\mathscr{C}$.

\begin{thm} \label{dual-core-inverse-8}
Let $\varphi : X \rightarrow X$ be a morphism of a category with involution and $n\geqslant 2$ a positive integer.
If $(\varphi_1, Z, \varphi_2)$ is an (epic, monic) factorization of $\varphi$ through $Z$,
then the following statements are equivalent: \\
(i) $\varphi$ is both Moore-Penrose invertible and group invertible;\\
(ii) $\varphi$ is both core invertible and dual core invertible;\\
(iii) $(\varphi^{\ast})^n\varphi_1 : X \rightarrow Z$ is left invertible and $\varphi_2(\varphi^{\ast})^n : Z \rightarrow X$ is right invertible;\\
(iv) $\varphi^n\varphi_2^{\ast} : X \rightarrow Z$ is left invertible and $\varphi_1^{\ast}\varphi^n : Z \rightarrow X$ is right invertible.\\
In this case,
\begin{equation*}
\begin{split}
    \varphi^{\co}&~=\varphi^{n-1}\mu^{\ast}\varphi_1^{\ast},\\
    \varphi_{\co}&~=\varphi_2^{\ast}\nu^{\ast}\varphi^{n-1},\\
    \varphi^{\dagger}&~=\nu\varphi_2\varphi^{2n-1}\mu^{\ast}\varphi_1^{\ast},\\
    \varphi^{\#}&~=(\varphi^{n-1}\mu^{\ast}\varphi_1^{\ast})^2\varphi=\varphi(\varphi_2^{\ast}\nu^{\ast}\varphi^{n-1})^2,
\end{split}
\end{equation*}
where $\mu(\varphi^{\ast})^n\varphi_1=1_Z=\varphi_2(\varphi^{\ast})^n\nu$ for some $\mu : Z \rightarrow X$ and $\nu : X \rightarrow Z$.
\end{thm}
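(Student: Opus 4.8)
The plan is to derive this theorem from the three earlier results \ref{co-inv.}, \ref{dual-core-inverse-1} and \ref{Li2}, together with a $\ast$-duality trick for part (iv); the formulas will fall out of the construction used in the implication (iii)$\Rightarrow$(i).

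First, (i)$\Leftrightarrow$(ii) is nothing but the equivalence (i)$\Leftrightarrow$(ii) of Lemma~\ref{Li2}, so there is nothing to prove there. For (ii)$\Rightarrow$(iii): if $\varphi$ is core invertible, then Theorem~\ref{co-inv.}(iii) says $(\varphi^{\ast})^n\varphi_1$ is left invertible for every $n\geqslant 2$; if $\varphi$ is dual core invertible, then Theorem~\ref{dual-core-inverse-1}(iii) says $\varphi_2(\varphi^{\ast})^n$ is right invertible for every $n\geqslant 2$. That is exactly (iii). For the return implication (iii)$\Rightarrow$(i), I would choose $\mu : Z\rightarrow X$ and $\nu : X\rightarrow Z$ with $\mu(\varphi^{\ast})^n\varphi_1=1_Z=\varphi_2(\varphi^{\ast})^n\nu$, substitute $1_Z$ into $\varphi=\varphi_1\cdot 1_Z\cdot\varphi_2$, and use $\varphi_1\varphi_2=\varphi$ to get
\[
\varphi=\varphi_1\mu(\varphi^{\ast})^n\varphi=(\varphi_1\mu)(\varphi^{\ast})^n\varphi
\quad\text{and}\quad
\varphi=\varphi(\varphi^{\ast})^n\nu\varphi_2=\varphi(\varphi^{\ast})^n(\nu\varphi_2).
\]
Setting $\alpha:=\varphi_1\mu$ and $\beta:=\nu\varphi_2$, this is precisely hypothesis (iii) of Lemma~\ref{Li2}, which then delivers (i) and (ii) simultaneously and closes the loop among (i), (ii), (iii).

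For (iv), the key observation is that $(\varphi_2^{\ast},Z,\varphi_1^{\ast})$ is an (epic, monic) factorization of $\varphi^{\ast}$ through $Z$: indeed $\varphi^{\ast}=\varphi_2^{\ast}\varphi_1^{\ast}$, while $\varphi_2$ monic forces $\varphi_2^{\ast}$ epic and $\varphi_1$ epic forces $\varphi_1^{\ast}$ monic (both by applying $\ast$ to the defining cancellation property). Applying the already-established equivalence (i)$\Leftrightarrow$(iii) to the morphism $\varphi^{\ast}$ with this factorization turns (iii) into: $((\varphi^{\ast})^{\ast})^n\varphi_2^{\ast}=\varphi^n\varphi_2^{\ast}$ is left invertible and $\varphi_1^{\ast}((\varphi^{\ast})^{\ast})^n=\varphi_1^{\ast}\varphi^n$ is right invertible, which is exactly (iv). Since $\varphi$ is Moore--Penrose invertible (resp.\ group invertible) if and only if $\varphi^{\ast}$ is, with $(\varphi^{\ast})^{\dagger}=(\varphi^{\dagger})^{\ast}$ and $(\varphi^{\ast})^{\#}=(\varphi^{\#})^{\ast}$ (a direct check by applying $\ast$ to the defining equations), condition (i) for $\varphi$ and condition (i) for $\varphi^{\ast}$ are the same statement; hence (i)$\Leftrightarrow$(iv).

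Finally, for the four displayed formulas I would reuse the data of (iii)$\Rightarrow$(i): substituting $\alpha=\varphi_1\mu$ and $\beta=\nu\varphi_2$ into the formulas of Lemma~\ref{Li2} gives $\varphi^{\co}=\varphi^{n-1}\alpha^{\ast}=\varphi^{n-1}\mu^{\ast}\varphi_1^{\ast}$, $\varphi_{\co}=\beta^{\ast}\varphi^{n-1}=\varphi_2^{\ast}\nu^{\ast}\varphi^{n-1}$, $\varphi^{\dagger}=\beta\varphi^{2n-1}\alpha^{\ast}=\nu\varphi_2\varphi^{2n-1}\mu^{\ast}\varphi_1^{\ast}$, and $\varphi^{\#}=(\varphi^{n-1}\mu^{\ast}\varphi_1^{\ast})^2\varphi=\varphi(\varphi_2^{\ast}\nu^{\ast}\varphi^{n-1})^2$, matching the claims. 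Honestly, no step here is a real obstacle: the theorem is essentially an assembly of Theorems~\ref{co-inv.}, \ref{dual-core-inverse-1} and Lemma~\ref{Li2}. The only two points needing genuine care are (a) verifying that $(\varphi_2^{\ast},Z,\varphi_1^{\ast})$ is an (epic, monic) factorization of $\varphi^{\ast}$ and that (i) is invariant under $\ast$, so that the duality argument for (iv) is legitimate, and (b) the purely clerical bookkeeping when substituting $\alpha,\beta$ (hence $\mu,\nu$) into the Lemma~\ref{Li2} formulas, making sure the $\ast$'s land on the correct factors.
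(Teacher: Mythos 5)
Your proposal is correct and follows essentially the same route as the paper: (i)$\Leftrightarrow$(ii) from Lemma~\ref{Li2}, (ii)$\Rightarrow$(iii) from Theorems~\ref{co-inv.} and \ref{dual-core-inverse-1}, (iii)$\Rightarrow$(i)/(ii) by producing $\alpha=\varphi_1\mu$ and $\beta=\nu\varphi_2$ and invoking Lemma~\ref{Li2}, and (iv) by applying the established equivalence to $\varphi^{\ast}$ with the factorization $(\varphi_2^{\ast},Z,\varphi_1^{\ast})$. The only difference is that you spell out the routine verifications (that this is an (epic, monic) factorization, and the substitution into the Lemma~\ref{Li2} formulas) which the paper leaves implicit.
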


\begin{proof}
$(i)\Leftrightarrow (ii)$. Obviously.

$(ii)\Rightarrow (iii)$. It is clear by Theorem~\ref{co-inv.} and Theorem~\ref{dual-core-inverse-1}.

$(iii)\Rightarrow (ii)$. Suppose that $\mu(\varphi^{\ast})^n\varphi_1=1_Z=\varphi_2(\varphi^{\ast})^n\nu$ for some $\mu : Z \rightarrow X$ and $\nu : X \rightarrow Z$,
where $n\geqslant 2$ is a positive integer.
Then we have
\begin{equation*}
\begin{split}
      \varphi=\varphi_1\cdot1_Z\cdot\varphi_2 = \varphi_1(\mu(\varphi^{\ast})^n\varphi_1)\varphi_2 = \varphi_1\mu(\varphi^{\ast})^n\varphi
\end{split}
\end{equation*}
and
\begin{equation*}
\begin{split}
      \varphi=\varphi_1\cdot1_Z\cdot\varphi_2 = \varphi_1(\varphi_2(\varphi^{\ast})^n\nu)\varphi_2 = \varphi(\varphi^{\ast})^n\nu\varphi_2.
\end{split}
\end{equation*}
Hence, the conclusion is now a consequence of Lemma~\ref{Li2}.

$(ii)\Leftrightarrow (iv)$.
Since $\varphi^{\ast}$ exists and has an (epic, monic) factorization $(\varphi_2^{\ast}, Z, \varphi_1^{\ast})$,
and $\varphi$ is both core invertible and dual core invertible if and only if $\varphi^{\ast}$ is both core invertible and dual core invertible.
Therefore,
the conclusion is a consequence of the preceding argument.

The expressions can be deduced by Lemma~\ref{Li2}.
\end{proof}

Let $\mathbb{C}_{m, n}$ be the set of all $m\times n$ complex matrices.
In \cite{WL},
H.X. Wang and X.L. Liu showed us that if $A\in \mathbb{C}_n^{\mathrm{CM}}$ has a full-rank decomposition $A=BC$,
then $A^{\co}=B(CB)^{-1}(B^{\ast}B)^{-1}B^{\ast}$,
where $\mathbb{C}_n^{\mathrm{CM}}=\{A\in \mathbb{C}_{n, n}: \mathrm{rank}(A^2)=\mathrm{rank}(A)\}$.
We will show another derivation for this result as follow.

\begin{cor}\cite[Theorem $2.4$]{WL}
Let $A\in \mathbb{C}_n^{\mathrm{CM}}$ with $\mathrm{rank}(A)=r$.
If $A$ has a full-rank decomposition $A=BC$,
then
\begin{equation*}
\begin{split}
   A^{\co}=B(CB)^{-1}(B^{\ast}B)^{-1}B^{\ast}.
\end{split}
\end{equation*}
\end{cor}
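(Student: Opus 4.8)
The plan is to regard the full-rank decomposition $A=BC$ as an (epic, monic) factorization of $A$, viewed as a morphism $X\to X$ in the category of complex matrices with conjugate transpose as involution, and then to invoke Theorem~\ref{co-inv.}. Here $B\in\mathbb{C}_{n,r}$ has full column rank $r$, hence is left invertible with left inverse $(B^{\ast}B)^{-1}B^{\ast}$ and in particular epic, while $C\in\mathbb{C}_{r,n}$ has full row rank $r$, hence is right invertible with right inverse $C^{\ast}(CC^{\ast})^{-1}$ and in particular monic. Thus $(B,Z,C)$, where $Z$ is the object of dimension $r$, is an (epic, monic) factorization of $A$ through $Z$, and Theorem~\ref{co-inv.} applies with $\varphi=A$, $\varphi_{1}=B$, $\varphi_{2}=C$.

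First I would verify condition $(ii)$ of Theorem~\ref{co-inv.}. Since $A\in\mathbb{C}_{n}^{\mathrm{CM}}$ we have $r=\mathrm{rank}(A^{2})=\mathrm{rank}\big(B(CB)C\big)\leqslant\mathrm{rank}(CB)\leqslant r$, so $\varphi_{2}\varphi_{1}=CB\in\mathbb{C}_{r,r}$ is invertible. Moreover every complex matrix is Moore--Penrose invertible, hence $\{1,3\}$-invertible, so Lemma~\ref{13inv.} gives that $\varphi^{\ast}\varphi_{1}=A^{\ast}B$ is left invertible; equivalently $A^{\ast}B=C^{\ast}(B^{\ast}B)$ is the product of a left-invertible matrix and an invertible one. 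Hence $A$ has a core inverse by Theorem~\ref{co-inv.}, and in fact condition $(iii)$ of that theorem holds, in particular for $n=2$.

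To obtain the explicit expression I would rerun the argument of $(iii)\Rightarrow(i)$ from the proof of Theorem~\ref{co-inv.} with $n=2$, which yields $A^{\co}=A(B\mu)^{\ast}$ (the expression $\varphi^{n-1}(\varphi_{1}\mu)^{\ast}$ with $n=2$) for any $\mu$ with $\mu(\varphi^{\ast})^{2}\varphi_{1}=1_{Z}$. Since $A^{\ast}=C^{\ast}B^{\ast}$, one rewrites $(A^{\ast})^{2}B=C^{\ast}(CB)^{\ast}(B^{\ast}B)$, so that
\[
\mu=(B^{\ast}B)^{-1}\big((CB)^{\ast}\big)^{-1}(CC^{\ast})^{-1}C
\]
satisfies $\mu(A^{\ast})^{2}B=1_{Z}$. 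Then $B\mu=B(B^{\ast}B)^{-1}\big((CB)^{\ast}\big)^{-1}(CC^{\ast})^{-1}C$, and, using that $B^{\ast}B$ and $CC^{\ast}$ are Hermitian,
\[
(B\mu)^{\ast}=C^{\ast}(CC^{\ast})^{-1}(CB)^{-1}(B^{\ast}B)^{-1}B^{\ast},
\]
whence $A^{\co}=A(B\mu)^{\ast}=B(CC^{\ast})(CC^{\ast})^{-1}(CB)^{-1}(B^{\ast}B)^{-1}B^{\ast}=B(CB)^{-1}(B^{\ast}B)^{-1}B^{\ast}$, after using $BCC^{\ast}=B(CC^{\ast})$ and cancelling $CC^{\ast}(CC^{\ast})^{-1}$.

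Alternatively, for the formula alone one may use Lemma~\ref{222}$(i)$, $A^{\co}=A^{\#}AA^{(1,3)}$, together with the classical full-rank-decomposition formulas $A^{\#}=B(CB)^{-2}C$ and $A^{(1,3)}=A^{\dagger}=C^{\ast}(CC^{\ast})^{-1}(B^{\ast}B)^{-1}B^{\ast}$, and simplify $A^{\#}AA^{\dagger}$ to the same expression. I expect the only mildly delicate steps to be the bookkeeping with conjugate transposes in the simplification of $(B\mu)^{\ast}$ and the rank inequality $\mathrm{rank}(A^{2})\leqslant\mathrm{rank}(CB)$ that converts the hypothesis $A\in\mathbb{C}_{n}^{\mathrm{CM}}$ into the invertibility of $CB$; neither presents a real obstacle, since the conceptual content is entirely supplied by Theorem~\ref{co-inv.}.
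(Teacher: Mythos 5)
Your proposal is correct and follows essentially the same route as the paper: both exhibit $\mu=(B^{\ast}B)^{-1}((CB)^{\ast})^{-1}(CC^{\ast})^{-1}C$ as a left inverse of $(A^{\ast})^{2}B$, invoke the formula $A^{\co}=A(B\mu)^{\ast}$ from the proof of $(iii)\Rightarrow(i)$ of Theorem~\ref{co-inv.} with $n=2$, and simplify. Your explicit verification that $CB$ is invertible (via $\mathrm{rank}(A^{2})=\mathrm{rank}(A)$) and that $A^{\ast}B$ is left invertible is a welcome addition that the paper leaves implicit, but it does not change the argument.
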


\begin{proof}
Let $U=(B^{\ast}B)^{-1}((CB)^{\ast})^{-1}(CC^{\ast})^{-1}C$,
then
\begin{equation*}
\begin{split}
   U(A^{\ast})^2B
   &~=[(B^{\ast}B)^{-1}((CB)^{\ast})^{-1}(CC^{\ast})^{-1}C](A^{\ast})^2B\\
   &~=[(B^{\ast}B)^{-1}((CB)^{\ast})^{-1}(CC^{\ast})^{-1}C](BC)^{\ast}(BC)^{\ast}B\\
   &~=[(B^{\ast}B)^{-1}((CB)^{\ast})^{-1}(CC^{\ast})^{-1}C]C^{\ast}(CB)^{\ast}B^{\ast}B\\
   &~=I_r,
\end{split}
\end{equation*}
thus $U$ is a left inverse of $(A^{\ast})^2B$.
According to the proof $(iii) \Rightarrow (i)$ of  Theorem~\ref{co-inv.},
we deduce that $A^{\co}=A(BU)^{\ast}$.
Therefore,
\begin{equation*}
\begin{split}
   A^{\co}
   &~=A(BU)^{\ast}\\
   &~=A[B(B^{\ast}B)^{-1}((CB)^{\ast})^{-1}(CC^{\ast})^{-1}C]^{\ast}\\
   &~=(BC)C^{\ast}(CC^{\ast})^{-1}(CB)^{-1}(B^{\ast}B)^{-1}B^{\ast}\\
   &~=B(CB)^{-1}(B^{\ast}B)^{-1}B^{\ast}.
\end{split}
\end{equation*}
\end{proof}

Likewise,
we have the following result.

\begin{cor}
Let $A\in \mathbb{C}_n^{\mathrm{CM}}$ with $\mathrm{rank}(A)=r$.
If $A$ has a full-rank decomposition $A=BC$,
then
\begin{equation*}
\begin{split}
   A_{\co}=C^{\ast}(CC^{\ast})^{-1}(CB)^{-1}C.
\end{split}
\end{equation*}
\end{cor}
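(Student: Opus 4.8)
The plan is to reproduce, in dual form, the derivation used for the preceding corollary. I begin with the structural facts forced by the full-rank decomposition $A=BC$: here $B\in\mathbb{C}_{n,r}$ and $C\in\mathbb{C}_{r,n}$ both have rank $r$, so the $r\times r$ matrices $B^{\ast}B$ and $CC^{\ast}$ are invertible; moreover $A^{2}=B(CB)C$, whence $\mathrm{rank}(A^{2})=\mathrm{rank}(CB)$, so the hypothesis $A\in\mathbb{C}_n^{\mathrm{CM}}$ is equivalent to the invertibility of the $r\times r$ matrix $CB$. Reading $A=BC$ as an (epic, monic) factorization of $A$ through $Z=\mathbb{C}^{r}$ with $\varphi_{1}=B$ and $\varphi_{2}=C$, this puts us in the setting of Theorem~\ref{dual-core-inverse-1}, with $\varphi_{2}\varphi_{1}=CB$ already invertible.

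Next I would produce an explicit right inverse of $\varphi_{2}(\varphi^{\ast})^{2}=C(A^{\ast})^{2}$. Put
\[
 V=B(B^{\ast}B)^{-1}\big((CB)^{\ast}\big)^{-1}(CC^{\ast})^{-1}.
\]
Expanding $(A^{\ast})^{2}=C^{\ast}B^{\ast}C^{\ast}B^{\ast}$ and multiplying out $C(A^{\ast})^{2}V$, the product telescopes: $B^{\ast}B$ cancels $(B^{\ast}B)^{-1}$, then $B^{\ast}C^{\ast}=(CB)^{\ast}$ cancels $\big((CB)^{\ast}\big)^{-1}$, and finally $CC^{\ast}$ cancels $(CC^{\ast})^{-1}$, leaving $C(A^{\ast})^{2}V=I_{r}$. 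Thus $V$ is a right inverse of $C(A^{\ast})^{2}$, i.e.\ $\varphi_{2}(\varphi^{\ast})^{2}\nu=1_{Z}$ with $\nu=V$. By the proof of $(iii)\Rightarrow(i)$ of Theorem~\ref{dual-core-inverse-1} (the dual of the corresponding step of Theorem~\ref{co-inv.}; equivalently, by Lemma~\ref{Li1}(ii) applied to $A=A(A^{\ast})^{2}(VC)$ and $A=A^{2}A^{\#}$), $A$ is dual core invertible with $A_{\co}=C^{\ast}V^{\ast}A$ (this is $\varphi_{2}^{\ast}\nu^{\ast}\varphi^{n-1}$ at $n=2$). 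Substituting $V^{\ast}=(CC^{\ast})^{-1}(CB)^{-1}(B^{\ast}B)^{-1}B^{\ast}$ and $A=BC$, the factor $B^{\ast}B$ cancels $(B^{\ast}B)^{-1}$ and one is left with $A_{\co}=C^{\ast}(CC^{\ast})^{-1}(CB)^{-1}C$, as claimed.

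A shorter route, which avoids choosing $V$ by hand, is to apply the preceding corollary to $A^{\ast}$: since $A^{\ast}=C^{\ast}B^{\ast}$ is again a full-rank decomposition and $A^{\ast}\in\mathbb{C}_n^{\mathrm{CM}}$, that corollary gives $(A^{\ast})^{\co}=C^{\ast}(B^{\ast}C^{\ast})^{-1}(CC^{\ast})^{-1}C$, and then $A_{\co}=\big((A^{\ast})^{\co}\big)^{\ast}$ yields the claim upon taking adjoints and using $(B^{\ast}C^{\ast})^{\ast}=CB$ and $(CC^{\ast})^{\ast}=CC^{\ast}$.

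The argument is essentially bookkeeping, since all the substance is carried by Theorem~\ref{dual-core-inverse-1} (resp.\ Lemma~\ref{Li1}(ii)). The only points requiring care are tracking the conjugate transposes and the left-to-right composition order, and---on the first route---guessing the correct form of $V$, which is the reflection of the left inverse $U$ used in the preceding corollary under the symmetry $A\mapsto A^{\ast}$. I do not expect any genuine obstacle beyond this.
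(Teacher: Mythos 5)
Your proof is correct and takes essentially the approach the paper intends: the paper gives no explicit argument for this corollary (it only says ``Likewise''), and your first route is precisely the dualization of the preceding corollary's proof, with $V=B(B^{\ast}B)^{-1}((CB)^{\ast})^{-1}(CC^{\ast})^{-1}$ playing the role of $U$ and the formula $A_{\co}=C^{\ast}V^{\ast}A$ coming from the dual theorem. Your alternative route via $A_{\co}=\big((A^{\ast})^{\co}\big)^{\ast}$ is also valid and is arguably the cleanest justification of ``Likewise.''
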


\section{Applications}\label{dc}
Let $R$ be a ring,
and let $_R\mathrm{Mod}$ be the category of $R$-modules and $R$-morphisms.
In \cite{PR2},
R. Puystjens and D.W. Robinson mentioned that associated with every morphism $\tau : M \rightarrow N$ of $_R\mathrm{Mod}$ are the $R$-modules $\mathrm{Im}\tau=M\tau=\{x\tau | x\in M\}$,
$\mathrm{Ker}\tau=\{x | x\tau=0\}$ and the $R$-morphisms $\tau_1 : M \rightarrow \mathrm{Im}\tau$,
$x \mapsto x\tau$,
and $\tau_2 : \mathrm{Im}\tau \rightarrow N$,
$x \mapsto x$.
In particular,
$(\tau_1, \mathrm{Im}\tau, \tau_2)$ is an (epic, monic) factorization of $\tau$ through the object $\mathrm{Im}\tau$,
which is herein called the standard factorization of $\tau$ in $_R\mathrm{Mod}$.

Now we consider the coexistence of the core inverse and dual core inverse of an $R$-morphism in the category of $R$-modules of a given ring $R$.

\begin{lem}\label{tau}
Let $\tau : M \rightarrow M$ be a morphism of $_R\mathrm{Mod}$ with standard factorization $(\tau_1, \mathrm{Im}\tau, \tau_2)$,
and let $n$ be a positive integer.
If the full subcategory determined by $M$ has an involution $\ast$,
then\\
(i) $(\tau^{\ast})^n\tau_1$ is epic if and only if $\mathrm{Im}(\tau^{\ast})^n\tau=\mathrm{Im}\tau$;\\
(ii) $\tau_2(\tau^{\ast})^n$ is monic if and only if $\mathrm{Ker}\tau(\tau^{\ast})^n=\mathrm{Ker}\tau$;\\
(iii) $\mathrm{Ker}(\tau^{\ast})^n\tau_1=\mathrm{Ker}(\tau^{\ast})^n\tau$;\\
(iv) $\mathrm{Im}\tau_2(\tau^{\ast})^n=\mathrm{Im}\tau(\tau^{\ast})^n$.
\end{lem}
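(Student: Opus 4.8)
The plan is to reduce each of the four assertions to a one-line computation with submodules of $M$, using the explicit form of the standard factorization. Recall that $\tau_1:M\to\mathrm{Im}\,\tau$ is the surjection $x\mapsto x\tau$, that $\tau_2:\mathrm{Im}\,\tau\to M$ is the inclusion, and that $\tau^{\ast}:M\to M$ (the subcategory determined by $M$ carrying the involution), so that $(\tau^{\ast})^n\tau_1$ and $\tau_2(\tau^{\ast})^n$ both make sense. Reading composition from left to right, $(\tau^{\ast})^n\tau_1$ sends $x\in M$ to $\big(x(\tau^{\ast})^n\big)\tau$, while $\tau_2(\tau^{\ast})^n$ is nothing but the restriction of $(\tau^{\ast})^n$ to the submodule $\mathrm{Im}\,\tau$. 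I will also use the standard fact that in the categories at play a morphism is epic exactly when it is surjective and monic exactly when it is injective, which is what turns (i) and (ii) into assertions about images and kernels.

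For (i): the image of $(\tau^{\ast})^n\tau_1$ is $M(\tau^{\ast})^n\tau=\mathrm{Im}\big((\tau^{\ast})^n\tau\big)$, which automatically lies inside the codomain $\mathrm{Im}\,\tau$; hence $(\tau^{\ast})^n\tau_1$ is epic precisely when this image exhausts $\mathrm{Im}\,\tau$, i.e. precisely when $\mathrm{Im}\big((\tau^{\ast})^n\tau\big)=\mathrm{Im}\,\tau$. For (ii): as $\tau_2(\tau^{\ast})^n$ is $(\tau^{\ast})^n$ restricted to $\mathrm{Im}\,\tau$, it is monic iff $\mathrm{Im}\,\tau\cap\mathrm{Ker}\,(\tau^{\ast})^n=0$; on the other hand $\mathrm{Ker}\big(\tau(\tau^{\ast})^n\big)=\{x\in M:x\tau\in\mathrm{Ker}\,(\tau^{\ast})^n\}$ always contains $\mathrm{Ker}\,\tau$, and equals it exactly when $x\tau(\tau^{\ast})^n=0$ forces $x\tau=0$, which is again the condition $\mathrm{Im}\,\tau\cap\mathrm{Ker}\,(\tau^{\ast})^n=0$. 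Matching these two descriptions is the only step that calls for a sentence of reasoning rather than a direct reading-off, and I would regard it as the (mild) crux.

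Parts (iii) and (iv) involve no epic/monic subtlety. For (iii) I would note that $(\tau^{\ast})^n\tau_1$ and $(\tau^{\ast})^n\tau$ act by the same rule $x\mapsto x(\tau^{\ast})^n\tau$, differing only in their codomains ($\mathrm{Im}\,\tau$ as opposed to $M$, into which the zero is the same element), whence their kernels agree. For (iv), $\mathrm{Im}\big(\tau_2(\tau^{\ast})^n\big)=(\mathrm{Im}\,\tau)(\tau^{\ast})^n=M\tau(\tau^{\ast})^n=\mathrm{Im}\big(\tau(\tau^{\ast})^n\big)$. I anticipate no real obstacle: the entire lemma is a matter of composing the two legs of the standard factorization in the correct order and tracking domains and codomains, the only genuine care being in honoring the left-to-right composition convention and in the short equivalence invoked in (ii).
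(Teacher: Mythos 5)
Your proof is correct and follows essentially the same route as the paper's: both unwind the standard factorization ($\tau_1$ the surjection $x\mapsto x\tau$, $\tau_2$ the inclusion) and use that epic and monic in $_R\mathrm{Mod}$ amount to surjective and injective, reducing (i)--(iv) to direct computations of images and kernels. The only cosmetic difference is that in (ii) you route both sides through the condition $\mathrm{Im}\,\tau\cap\mathrm{Ker}\,(\tau^{\ast})^n=0$, whereas the paper chases the two inclusions element by element.
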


\begin{proof}
$(i)$.
Assume that $(\tau^{\ast})^n\tau_1 : M \rightarrow \mathrm{Im}\tau$ is epic.
It is obvious that $\mathrm{Im}(\tau^{\ast})^n\tau\subseteq \mathrm{Im}\tau$,
so we only need to prove that $\mathrm{Im}\tau\subseteq \mathrm{Im}(\tau^{\ast})^n\tau$.
Since $(\tau^{\ast})^n\tau_1$ is epic,
if $z\in \mathrm{Im}\tau$,
then there is a $y\in M$ such that $z=y(\tau^{\ast})^2\tau_1$,
and
$$z=z\tau_2=(y(\tau^{\ast})^n\tau_1)\tau_2=y(\tau^{\ast})^n\tau\in \mathrm{Im}(\tau^{\ast})^n\tau.$$
Thus,
$\mathrm{Im}(\tau^{\ast})^n\tau=\mathrm{Im}\tau$.
Conversely,
Assume that $\mathrm{Im}(\tau^{\ast})^n\tau=\mathrm{Im}\tau$ and let $z\in \mathrm{Im}\tau$,
then there is a $y\in M$ such that
$$z=y(\tau^{\ast})^n\tau=y(\tau^{\ast})^n\tau_1\tau_2=y(\tau^{\ast})^n\tau_1.$$
That is to say,
$(\tau^{\ast})^n\tau_1$ is surjective as a function and hence is epic as an $R$-morphism.

$(ii)$.
Suppose that $\tau_2(\tau^{\ast})^n : \mathrm{Im}\tau \rightarrow M$ is monic.
It is easy to see that $\mathrm{Ker}\tau\subseteq \mathrm{Ker}\tau(\tau^{\ast})^n$,
hence,
we only need to show that $\mathrm{Ker}\tau(\tau^{\ast})^n\subseteq \mathrm{Ker}\tau$.
Since $\tau_2(\tau^{\ast})^n$ is monic,
for any $z\in \mathrm{Ker}\tau(\tau^{\ast})^n$,
we have $0=z\tau(\tau^{\ast})^n=z\tau_1\tau_2(\tau^{\ast})^n$,
thus $0=z\tau_1=z\tau_1\tau_2=z\tau$,
that is $z\in \mathrm{Ker}\tau$.
Conversely,
suppose $\mathrm{Ker}\tau(\tau^{\ast})^n=\mathrm{Ker}\tau$ and $z\tau_2(\tau^{\ast})^n=0$,
where $z\in \mathrm{Im}\tau$.
Since $\tau_1$ is epic,
there exists a $y\in M$ such that $z=y\tau_1$.
Therefore,
$$0=z\tau_2(\tau^{\ast})^n=(y\tau_1)\tau_2(\tau^{\ast})^n=y\tau(\tau^{\ast})^n,$$
which follows that $0=y\tau=y\tau_1\tau_2=y\tau_1=z$.
Hence,
$\tau_2(\tau^{\ast})^n$ is monic.

Part $(iii)$ follows from the fact that $\tau_2$ is an insertion and part $(iv)$ is a consequence of the fact that $\tau_1$ is epic.
\end{proof}

\begin{thm}\label{new}
Let $\tau : M \rightarrow M$ be a morphism of the category $_R\mathrm{Mod}$,
and let $n\geqslant 2$ be a positive integer.
If the full subcategory determined by $M$ has an involution $\ast$,
then the following statements are equivalent: \\
(i) $\tau$ is both core invertible and dual core invertible;\\
(ii) $\tau$ is both Moore-Penrose invertible and group invertible;\\
(iii) both $\mathrm{Ker}(\tau^{\ast})^n\tau$ and $\mathrm{Im}\tau(\tau^{\ast})^n$ are direct summands of $M$,
and $\mathrm{Im}(\tau^{\ast})^n\tau=\mathrm{Im}\tau$,
$\mathrm{Ker}\tau(\tau^{\ast})^n=\mathrm{Ker}\tau$;\\
(iv) both $\mathrm{Ker}\tau^n\tau^{\ast}$ and $\mathrm{Im}\tau^{\ast}\tau^n$ are direct summands of $M$,
and $\mathrm{Im}\tau^n\tau^{\ast}=\mathrm{Im}\tau^{\ast}$,
$\mathrm{Ker}\tau^{\ast}\tau^n=\mathrm{Ker}\tau^{\ast}$;\\
(v) $M=\mathrm{Ker}\tau\oplus \mathrm{Im}(\tau^{\ast})^n$,
$M=\mathrm{Ker}(\tau^{\ast})^n\oplus \mathrm{Im}\tau$;\\
(vi) $M=\mathrm{Ker}\tau^{\ast}\oplus \mathrm{Im}\tau^n$,
$M=\mathrm{Ker}\tau^n\oplus \mathrm{Im}\tau^{\ast}$.
\end{thm}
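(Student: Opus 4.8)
The plan is to route all six conditions through the standard factorization $(\tau_1,\mathrm{Im}\tau,\tau_2)$ of $\tau$, reducing to Theorem~\ref{dual-core-inverse-8} and Lemma~\ref{tau}, and then to obtain $(iv)$ and $(vi)$ by symmetry under $\ast$.

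First, $(i)\Leftrightarrow(ii)$ is exactly the equivalence of $(i)$ and $(ii)$ in Lemma~\ref{Li2}. For $(ii)\Leftrightarrow(iii)$ I would apply Theorem~\ref{dual-core-inverse-8} to $\tau$ together with its standard factorization; condition $(iii)$ there then asserts that $(\tau^{\ast})^n\tau_1$ is left invertible and $\tau_2(\tau^{\ast})^n$ is right invertible. In ${}_R\mathrm{Mod}$ a morphism is left invertible (a split epimorphism) if and only if it is epic and its kernel is a direct summand of its domain, and right invertible (a split monomorphism) if and only if it is monic and its image is a direct summand of its codomain. Lemma~\ref{tau} makes these explicit: by $(i)$, ``$(\tau^{\ast})^n\tau_1$ epic'' becomes $\mathrm{Im}(\tau^{\ast})^n\tau=\mathrm{Im}\tau$; by $(iii)$, $\mathrm{Ker}(\tau^{\ast})^n\tau_1=\mathrm{Ker}(\tau^{\ast})^n\tau$; by $(ii)$, ``$\tau_2(\tau^{\ast})^n$ monic'' becomes $\mathrm{Ker}\tau(\tau^{\ast})^n=\mathrm{Ker}\tau$; by $(iv)$, $\mathrm{Im}\tau_2(\tau^{\ast})^n=\mathrm{Im}\tau(\tau^{\ast})^n$. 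Assembling these, Theorem~\ref{dual-core-inverse-8}$(iii)$ for the standard factorization is, word for word, the present statement $(iii)$. (Only parts $(i)$, $(ii)$, $(iii)$ of Theorem~\ref{dual-core-inverse-8} are invoked, and the equivalence of $(ii)$ and $(iii)$ there uses the involution only on the endomorphisms of $M$, which is what the hypothesis provides.)

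Next $(iii)\Leftrightarrow(iv)$: apply the equivalence $(i)\Leftrightarrow(iii)$ just obtained to $\tau^{\ast}$ in place of $\tau$, whose standard factorization is $(\tau_1^{\ast},\mathrm{Im}\tau^{\ast},\tau_2^{\ast})$; since $\tau$ is both core and dual core invertible if and only if $\tau^{\ast}$ is, and statement $(iii)$ read for $\tau^{\ast}$ is verbatim statement $(iv)$ for $\tau$, this gives $(i)\Leftrightarrow(iv)$. For the remaining conditions I would prove $(ii)\Rightarrow(v)$ and $(v)\Rightarrow(iii)$. Granting $(ii)$, let $\tau^{\dagger}$ be the Moore-Penrose inverse of $\tau$; then $\tau\tau^{\dagger}$ and $\tau^{\dagger}\tau$ are self-adjoint idempotents, and a short computation from the Moore-Penrose equations yields $\mathrm{Ker}(\tau\tau^{\dagger})=\mathrm{Ker}\tau$, $\mathrm{Im}(\tau\tau^{\dagger})=\mathrm{Im}\tau^{\dagger}=\mathrm{Im}\tau^{\ast}$, $\mathrm{Ker}(\tau^{\dagger}\tau)=\mathrm{Ker}\tau^{\dagger}=\mathrm{Ker}\tau^{\ast}$, $\mathrm{Im}(\tau^{\dagger}\tau)=\mathrm{Im}\tau$; splitting $M$ along these idempotents gives $M=\mathrm{Ker}\tau\oplus\mathrm{Im}\tau^{\ast}$ and $M=\mathrm{Ker}\tau^{\ast}\oplus\mathrm{Im}\tau$. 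Since $\tau$, hence $\tau^{\ast}$, is also group invertible, $\tau^{\ast}$ restricts to an automorphism of $\mathrm{Im}\tau^{\ast}$, so $\mathrm{Im}(\tau^{\ast})^n=\mathrm{Im}\tau^{\ast}$ and $\mathrm{Ker}(\tau^{\ast})^n=\mathrm{Ker}\tau^{\ast}$, which is $(v)$. Conversely, from $(v)$: the splitting $M=\mathrm{Ker}\tau\oplus\mathrm{Im}(\tau^{\ast})^n$ forces $\mathrm{Im}(\tau^{\ast})^n\tau=\mathrm{Im}\tau$, $\mathrm{Ker}(\tau^{\ast})^n\tau=\mathrm{Ker}(\tau^{\ast})^n$ and $\mathrm{Im}\tau(\tau^{\ast})^n=\mathrm{Im}(\tau^{\ast})^n$, while $M=\mathrm{Ker}(\tau^{\ast})^n\oplus\mathrm{Im}\tau$ forces $\mathrm{Ker}\tau(\tau^{\ast})^n=\mathrm{Ker}\tau$ together with the fact that $\mathrm{Ker}(\tau^{\ast})^n$ and $\mathrm{Im}(\tau^{\ast})^n$ are direct summands of $M$ --- exactly the four clauses of $(iii)$. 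Lastly $(v)\Leftrightarrow(vi)$ follows by applying the now-established equivalence $(i)\Leftrightarrow(v)$ to $\tau^{\ast}$, since $(v)$ read for $\tau^{\ast}$ is exactly $(vi)$ for $\tau$.

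The step I expect to be most delicate is the bookkeeping in $(ii)\Leftrightarrow(iii)$: one must translate the paper's notion of left/right invertibility --- which, under the left-to-right composition convention, amounts to split epimorphism / split monomorphism --- into the ``epic (resp.\ monic) plus direct-summand'' form, and then verify that the four parts of Lemma~\ref{tau} feed in so that precisely the four clauses of $(iii)$ are produced, with no hypothesis lost or repeated. A lesser subtlety is extracting $\mathrm{Im}(\tau\tau^{\dagger})=\mathrm{Im}\tau^{\ast}$ and $\mathrm{Ker}(\tau^{\dagger}\tau)=\mathrm{Ker}\tau^{\ast}$ cleanly from the Moore-Penrose equations, with the composition taken on the correct side.
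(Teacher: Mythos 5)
Your proposal is correct, and its overall architecture coincides with the paper's: $(i)\Leftrightarrow(ii)$ from Lemma~\ref{Li2}; $(ii)\Leftrightarrow(iii)$ by feeding parts (i)--(iv) of Lemma~\ref{tau} into Theorem~\ref{dual-core-inverse-8} via the standard factorization, using the split-epi/split-mono characterizations in $_R\mathrm{Mod}$; $(iv)$ and $(vi)$ by applying the established equivalences to $\tau^{\ast}$; and $(v)\Rightarrow(iii)$ by the same elementwise argument. The one place you genuinely diverge is the implication into $(v)$: the paper proves $(i)\Rightarrow(v)$ by splitting $M$ along the idempotents $\tau\tau^{\co}$ and $\tau_{\co}\tau$ and then running telescoping identities such as $\tau\tau^{\co}=((\tau^{\co})^n)^{\ast}(\tau^{\ast})^n$ and $(\tau^{\ast})^n=\tau_{\co}\tau(\tau^{\ast})^n$ to identify the summands as $\mathrm{Im}(\tau^{\ast})^n$ and $\mathrm{Ker}(\tau^{\ast})^n$ directly, plus a separate computation showing $\mathrm{Ker}(\tau^{\ast})^n\cap\mathrm{Im}\tau=\{0\}$. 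You instead split along the Moore--Penrose idempotents $\tau\tau^{\dagger}$ and $\tau^{\dagger}\tau$ to get $M=\mathrm{Ker}\tau\oplus\mathrm{Im}\tau^{\ast}$ and $M=\mathrm{Ker}\tau^{\ast}\oplus\mathrm{Im}\tau$, and then use group invertibility of $\tau^{\ast}$ to replace $\mathrm{Im}\tau^{\ast}$ by $\mathrm{Im}(\tau^{\ast})^n$ and $\mathrm{Ker}\tau^{\ast}$ by $\mathrm{Ker}(\tau^{\ast})^n$. Both work; your version isolates where $n$ enters (only through the power-stability supplied by the group inverse) and replaces the longest computation in the paper's proof by a more conceptual one, at the cost of drawing on both generalized inverses furnished by hypothesis $(ii)$. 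One bookkeeping slip in your $(v)\Rightarrow(iii)$: the clause $\mathrm{Im}\tau(\tau^{\ast})^n=\mathrm{Im}(\tau^{\ast})^n$ follows from the second splitting $M=\mathrm{Ker}(\tau^{\ast})^n\oplus\mathrm{Im}\tau$ (write $z=k+x\tau$ and apply $(\tau^{\ast})^n$), not from the first; this is harmless since both splittings are assumed in $(v)$.
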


\begin{proof}
$(i)\Leftrightarrow (ii)$.
Clearly.

$(i)\Leftrightarrow (iii)$.
As is known that an epic morphism in $_R\mathrm{Mod}$ is left invertible if and only if its kernel is a direct summand of its domain.
(See for example \cite[p. 12]{DG}.)
In particular,
$(\tau^{\ast})^n\tau_1$ is left invertible if and only if $(\tau^{\ast})^n\tau_1$ is epic and $\mathrm{Ker}(\tau^{\ast})^n\tau_1$ is a direct summand of $M$.
Thus,
by (i) and (iii) in Lemma~\ref{tau},
$(\tau^{\ast})^n\tau_1$ is left invertible if and only if $\mathrm{Im}(\tau^{\ast})^n\tau=\mathrm{Im}\tau$ and $\mathrm{Ker}(\tau^{\ast})^n\tau$ is a direct summand of $M$.
In a similar way,
since a monic morphism in $\mathrm{Mod}_R$ is right invertible if and only if its image is a direct summand of its codomain.
then from (ii) and (iv) in Lemma~\ref{tau},
$\tau_2(\tau^{\ast})^n$ is right invertible if and only if $\mathrm{Ker}\tau(\tau^{\ast})^n=\mathrm{Ker}\tau$ and $\mathrm{Im}\tau(\tau^{\ast})^n$ is a direct summand of $M$.
Consequently,
we get the conclusion by Theorem~\ref{dual-core-inverse-8}.

$(i)\Leftrightarrow (iv)$.
Since $\tau$ is both core invertible and dual core invertible if and only if $\tau^{\ast}$ is both core invertible and dual core invertible.
Then,
we can get this conclusion by replacing $\tau$ with $\tau^{\ast}$ in the preceding argument.

$(i)\Rightarrow (v)$.
Given $\tau^{\co}$ and $\tau_{\co}$,
then $M=M(1_M-\tau\tau^{\co})\oplus M\tau\tau^{\co}$.
Clearly $M(1_M-\tau\tau^{\co})=\mathrm{Ker}\tau$.
Since
\begin{equation*}
\begin{split}
   \tau\tau^{\co}
    &~=(\tau\tau^{\co})^{\ast}=(\tau^{\co})^{\ast}\tau^{\ast}=(\tau\tau^{\co}\tau^{\co})^{\ast}\tau^{\ast}=((\tau^{\co})^2)^{\ast}(\tau^{\ast})^2\\
    &~=(\tau^{\co})^{\ast}(\tau\tau^{\co}\tau^{\co})^{\ast}(\tau^{\ast})^2=((\tau^{\co})^3)^{\ast}(\tau^{\ast})^3\\
    &~=\cdots=((\tau^{\co})^n)^{\ast}(\tau^{\ast})^n
\end{split}
\end{equation*}
and
$$(\tau^{\ast})^n=(\tau^{\ast})^{n-1}\tau^{\ast}=(\tau^{\ast})^{n-1}(\tau\tau^{\co}\tau)^{\ast}=(\tau^{\ast})^{n-1}\tau^{\ast}\tau\tau^{\co}, $$
then $M\tau\tau^{\co}=\mathrm{Im}(\tau^{\ast})^n$.
Thus $M=\mathrm{Ker}\tau\oplus \mathrm{Im}(\tau^{\ast})^n$.

In addition,
for any $z\in M$,
$z=(z-z\tau_{\co}\tau)+z\tau_{\co}\tau$,
where $z\tau_{\co}\tau\in \mathrm{Im}\tau$.
Now we show that $z-z\tau_{\co}\tau\in \mathrm{Ker}(\tau^{\ast})^n$.
Since
$$(\tau^{\ast})^n=(\tau\tau_{\co}\tau)^{\ast}(\tau^{\ast})^{n-1}=\tau_{\co}\tau\tau^{\ast}(\tau^{\ast})^{n-1}=\tau_{\co}\tau(\tau^{\ast})^n,$$
then $(z-z\tau_{\co}\tau)(\tau^{\ast})^n=z(\tau^{\ast})^n-z\tau_{\co}\tau(\tau^{\ast})^n=0$.
Let $y\in \mathrm{Ker}(\tau^{\ast})^n\cap \mathrm{Im}\tau$,
then $y(\tau^{\ast})^n=0$ and there exists an $x\in M$ such that $y=x\tau$.
Hence,
\begin{equation*}
\begin{split}
   y
   &~=x\tau=x(\tau\tau_{\co}\tau)=x\tau\tau^{\ast}(\tau_{\co})^{\ast}=x\tau\tau^{\ast}(\tau_{\co}\tau_{\co}\tau)^{\ast}\\
   &~=y(\tau^{\ast})^2(\tau_{\co}^2)^{\ast}=y(\tau^{\ast})^2(\tau_{\co}\tau_{\co}\tau)^{\ast}\tau_{\co}^{\ast}\\
   &~=y(\tau^{\ast})^3(\tau_{\co}^3)^{\ast}=\cdots=y(\tau^{\ast})^n(\tau_{\co}^n)^{\ast}=0.
\end{split}
\end{equation*}
Therefore,
we have $M=\mathrm{Ker}(\tau^{\ast})^n\oplus \mathrm{Im}\tau$.

$(v)\Rightarrow (iii)$.
Let $M=\mathrm{Ker}\tau\oplus \mathrm{Im}(\tau^{\ast})^n$.
Then,
for any $z\in M$,
$z=k+y(\tau^{\ast})^n$,
where $k\in \mathrm{Ker}\tau$.
Therefore,
$$z\tau=y(\tau^{\ast})^n\tau\in \mathrm{Im}(\tau^{\ast})^n\tau,$$
which implies $\mathrm{Im}\tau\subseteq \mathrm{Im}(\tau^{\ast})^n\tau$.
Hence,
$\mathrm{Im}\tau=\mathrm{Im}(\tau^{\ast})^n\tau$.
In addition,
if $y(\tau^{\ast})^n\tau=0$,
then
$$y(\tau^{\ast})^n\in \mathrm{Ker}\tau\cap \mathrm{Im}(\tau^{\ast})^n=\{0\},$$
thus $\mathrm{Ker}(\tau^{\ast})^n\tau\subseteq \mathrm{Ker}(\tau^{\ast})^n$.
Moreover,
$\mathrm{Ker}(\tau^{\ast})^n\tau=\mathrm{Ker}(\tau^{\ast})^n$

Likewise,
let $M=\mathrm{Ker}(\tau^{\ast})^n\oplus \mathrm{Im}\tau$,
then $\mathrm{Ker}\tau(\tau^{\ast})^n=\mathrm{Ker}\tau$ and $\mathrm{Im}(\tau^{\ast})^n=\mathrm{Im}\tau(\tau^{\ast})^n$.

$(vi)\Leftrightarrow (v)$.
We can get this conclusion immediately by replacing $\tau$ with $\tau^{\ast}$ in the statement $(v)$.
\end{proof}

\begin{rem}
It should be noted that when taking $n=1$,
Lemma~\ref{tau} is consistent with \cite[Lemma $4$]{PR2},
and the statements (iii),
(iv),
(v)and (vi) in Theorem~\ref{new} are all equivalent to that $\tau$ is Moore-Penrose invertible.
(See \cite[Theorem $4$]{PR2}.)
\end{rem}

\begin{cor}\label{high}
Let $R$ be a $\ast$-ring and $a\in R$ and $n\geqslant 2$ a positive integer,
then the following statements are equivalent: \\
(i) $a$ is both Moore-Penrose invertible and group invertible;\\
(ii) $a$ is both core invertible and dual core invertible;\\
(iii) $R=$ $^{\circ}\!a\oplus R(a^{\ast})^n$,
$R=$ $^{\circ}\!((a^{\ast})^n)\oplus Ra$;\\
(iv) $R=(a^{\ast})^{\circ}\oplus a^nR$,
$R=(a^n)^{\circ}\oplus a^{\ast}R$;\\
(v) $R=$ $^{\circ}\!(a^{\ast})\oplus Ra^n$,
$R=$ $^{\circ}\!(a^n)\oplus Ra^{\ast}$;\\
(vi) $R=a^{\circ}\oplus (a^{\ast})^nR$,
$R=((a^{\ast})^n)^{\circ}\oplus aR$.
\end{cor}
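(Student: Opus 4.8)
The plan is to read Corollary~\ref{high} as the special case of Theorem~\ref{new} in which the module is the regular representation. First I would fix the $\ast$-ring $R$ and regard $R$ as the object ${}_RR$ of $_R\mathrm{Mod}$. With the paper's left-to-right convention, every $R$-morphism of ${}_RR$ is a right multiplication $\rho_a : x \mapsto xa$ for a unique $a \in R$, and $\rho_a\rho_b = \rho_{ab}$, so $a \mapsto \rho_a$ is an isomorphism of $R$ onto the endomorphism monoid of the full subcategory determined by ${}_RR$. Setting $(\rho_a)^{\ast} := \rho_{a^{\ast}}$, one checks $(\rho_a\rho_b)^{\ast} = \rho_{(ab)^{\ast}} = \rho_{b^{\ast}a^{\ast}} = (\rho_b)^{\ast}(\rho_a)^{\ast}$ and $((\rho_a)^{\ast})^{\ast} = \rho_a$, so this full subcategory carries an involution. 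The defining equations of the core inverse, the dual core inverse, the Moore--Penrose inverse and the group inverse involve only products and $\ast$, hence transport verbatim along this $\ast$-ring isomorphism: $\rho_a$ has any one of these four invertibilities in the category precisely when $a$ has it in $R$, so in particular the equivalence $(i)\Leftrightarrow(ii)$ of the corollary is that of Theorem~\ref{new} for $\tau = \rho_a$.

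Next I would translate the submodule conditions of Theorem~\ref{new} for $\tau = \rho_a$. From the description of the standard factorization one has $\mathrm{Im}\,\rho_b = Rb$ and $\mathrm{Ker}\,\rho_b = {}^{\circ}b$ for every $b \in R$, together with $(\rho_a)^{\ast} = \rho_{a^{\ast}}$ and $\rho_a^{\,n} = \rho_{a^n}$. Hence Theorem~\ref{new}$(v)$, that $M = \mathrm{Ker}\,\tau \oplus \mathrm{Im}(\tau^{\ast})^n$ and $M = \mathrm{Ker}(\tau^{\ast})^n \oplus \mathrm{Im}\,\tau$, becomes $R = {}^{\circ}a \oplus R(a^{\ast})^n$ and $R = {}^{\circ}((a^{\ast})^n) \oplus Ra$, which is Corollary~\ref{high}$(iii)$; and Theorem~\ref{new}$(vi)$ becomes Corollary~\ref{high}$(v)$. (Statements $(iii)$, $(iv)$ of Theorem~\ref{new} would serve equally well, but $(v)$, $(vi)$ match the corollary most directly.)

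For the right-sided conditions $(iv)$ and $(vi)$ of the corollary I would run the same argument with $R$ viewed as the object $R_R$ of $\mathrm{Mod}_R$, whose $R$-morphisms are the left multiplications $\lambda_a : x \mapsto ax$ with $\lambda_a\lambda_b = \lambda_{ba}$ and involution $(\lambda_a)^{\ast} := \lambda_{a^{\ast}}$; here $\mathrm{Im}\,\lambda_b = bR$ and $\mathrm{Ker}\,\lambda_b = b^{\circ}$. Since $\mathrm{End}(R_R)$ with composition is $R^{\mathrm{op}}$, a short check of the defining equations shows that $\lambda_a$ is core invertible exactly when $a$ is dual core invertible in $R$ and dual core invertible exactly when $a$ is core invertible in $R$, while Moore--Penrose and group invertibility are unaffected; thus ``$\lambda_a$ is core and dual core invertible'' is literally the same condition as ``$a$ is core and dual core invertible''. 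Applying Theorem~\ref{new}$(v)$, $(vi)$ to $\tau = \lambda_a$ and translating then yields Corollary~\ref{high}$(vi)$ and $(iv)$, respectively. (Equivalently, $(iv)$ and $(vi)$ are the left--right duals of $(v)$ and $(iii)$, obtained by passing to $R^{\mathrm{op}}$, using that an element is core and dual core invertible in $R$ if and only if it is so in $R^{\mathrm{op}}$.)

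I do not expect a genuine difficulty: the reduction is essentially a dictionary. The only point requiring care is the bookkeeping of conventions --- left versus right modules, whether $a \mapsto \rho_a$ or $a \mapsto \lambda_a$ is a homomorphism or an anti-homomorphism with respect to the paper's composition, and the correct identification of $\mathrm{Ker}$ and $\mathrm{Im}$ of these multiplication maps with one-sided annihilators and principal one-sided ideals. One also wants to record that the induced unary operation genuinely satisfies $(\varphi\psi)^{\ast} = \psi^{\ast}\varphi^{\ast}$ and $(\varphi^{\ast})^{\ast} = \varphi$ on each of the two full subcategories, so that Theorem~\ref{new} applies, and that every submodule occurring in that theorem really is a submodule of ${}_RR$ (resp.\ $R_R$).
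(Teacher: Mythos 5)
Your proposal is correct and takes essentially the same route as the paper: the paper also specializes Theorem~\ref{new} to the regular module ${}_RR$ with $\tau$ the right multiplication by $a$, so that parts $(v)$ and $(vi)$ of that theorem become parts $(iii)$ and $(v)$ of the corollary. The only cosmetic difference is that the paper derives $(iv)$ and $(vi)$ by applying the involution $\ast$ to the decompositions in $(iii)$ and $(v)$ (using $({}^{\circ}b)^{\ast}=(b^{\ast})^{\circ}$ and $(Rb)^{\ast}=b^{\ast}R$) rather than rerunning the argument in $\mathrm{Mod}_R$, which is exactly the $R^{\mathrm{op}}$ identification you mention in your parenthetical remark.
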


\begin{proof}
As is known that $(i)\Leftrightarrow (ii)$.
And $(ii)\Leftrightarrow (iii)\Leftrightarrow (v)$ follows from Theorem~\ref{new}.
When taking involution on statements $(iii)$ and $(v)$,
we obtain statements $(iv)$ and $(vi)$, respectively.
\end{proof}

\vspace{0.2cm} \noindent {\large\bf Acknowledgements}

This research is supported by the National Natural Science Foundation of China (No.11771076); the Scientific Innovation Research of College Graduates in Jiangsu Province (No.KYCX17\_0037).


\begin{thebibliography}{99}
\bibitem{PR2} R. Puystjens, D.W. Robinson, The Moore-Penrose inverse of a morphism with factorization, Linear Algebra Appl. 40(1981)129-141.

\bibitem{PR1} R. Puystjens, D.W. Robinson, The Moore-Penrose inverse of a morphism in an additive category, Comm. Algebra. 12(3)(1984)287-299.

\bibitem{PR3} R. Puystjens, D.W. Robinson, Symmetric morphisms and the existence of Moore-Penrose inverses, Linear Algebra Appl. 131(1990)51-69.

\bibitem{PP} P. Pe$\check{s}$ka, The Moore-Penrose inverse of a partitioned morphism in an additive category, Math. Slovaca. 50(4)(2000)437-452.

\bibitem{You} H. You, J.L. Chen, Generalized inverses of a sum of morphisms, Linear Algebra Appl. 338(2001)261-273.

\bibitem{Liu3} S.D. Liu, H. You, On generalized Moore-Penrose inverses of morphisms with universal factorization, Mathematica Applicata. 14(3)(2001)37-40.

\bibitem{OM} O.M. Baksalary, G. Trenkler, Core inverse of matrices, Linear Multilinear Algebra. 58(2010)681-697.

\bibitem{XSZ} S.Z. Xu, J.L. Chen, X.X. Zhang, New characterizations for core and dual core inverses in rings with involution, Front. Math. China. 12(1)(2017)231-246.

\bibitem{LiChen} T.T. Li, J.L. Chen, S.Z. Xu, Core and dual core inverses of a sum of morphisms, arXiv:1612.09482v1 [math.CT], 2016.

\bibitem{DSR} D.S. Raki\'{c}, N.\v{C}. Din\v{c}i\'{c}, D.S. Djordievi\'{c}, Group, Moore-Penrose, core and dual core inverse in rings with involution, Linear Algebra Appl. 463(2014)115-133.

\bibitem{PPP} P. Patr\'{i}cio, The Moore-Penrose inverse of a factorization, Linear Algebra Appl. 370(2003)227-235.

\bibitem{HHZhu} H.H. Zhu, X.X. Zhang, J.L. Chen, Generalized inverses of a factorization in a ring with involution, Linear Algebra Appl. 472(2015)142-150.

\bibitem{DW2} D.W. Robinson, R. Puystjens, EP morphisms, Linear Algebra Appl. 64(1985)157-174.

\bibitem{Chen} J.L. Chen, H.H. Zhu, Patr\'{\i}cio P, Zhang YL. Characterizations and representations of core and dual core inverses. Canad. Math. Bull. 60(2)(2017)269-282.

\bibitem{Li} T.T. Li, J.L. Chen, Characterizations of core and dual core inverses in rings with involution, Linear Multilinear Algebra. Doi: 10.1080/03081087.2017.1320963.

\bibitem{Hartwig} R.E. Hartwig, Block generalized inverses, Arch. Rational Mech. Anal. 61(1976)197-251.

\bibitem{WL} H.X. Wang, X.J. Liu, Characterizations of the core inverse and the core partial ordering, Linear Multilinear Algebra. 63(2015)1829-1836.

\bibitem{DG} D.G. Northcott, An Introduction to Homological Algebra, Cambridge University Press. 1960.

\end{thebibliography}
\end{document}